\newtheorem{thm}{Theorem}[section]
\newtheorem{lem}[thm]{Lemma}
\newtheorem{assumption}[thm]{Assumption}
\newtheorem{definition}[thm]{Definition}
\newtheorem{example}[thm]{Example}
\newtheorem{remark}[thm]{Remark}
\newenvironment{rem}{\begin{remark}\rm}{\end{remark}}
\newcommand{\EX}{{\mathbb{E}}}
\newcommand{\PX}{{\mathbb{P}}}
\title{Information-Theoretic Extensions of the Shannon-Nyquist Sampling Theorem}
\author{\small \begin{tabular}{ccc}
Xianming Liu & Guangyue Han\\
Huazhong University of Science and Technology & The University of Hong Kong\\
email: xmliu@hust.edu.cn & email: ghan@hku.hk\\
\end{tabular}}
\date{{\normalsize \today}}
\begin{document} \maketitle

\begin{abstract}
A continuous-time white Gaussian channel can be formulated using a white Gaussian noise, and a conventional way for examining such a channel is the sampling approach based on the classical Shannon-Nyquist sampling theorem, where the original continuous-time channel is converted to an equivalent discrete-time channel, to which a great variety of established tools and methodology can be applied. However, one of the key issues of this scheme is that continuous-time feedback cannot be incorporated into the channel model. It turns out that this issue can be circumvented by considering the Brownian motion formulation of a continuous-time white Gaussian channel. Nevertheless, as opposed to the white Gaussian noise formulation, a link that establishes the information-theoretic connection between a continuous-time white Gaussian channel under the Brownian motion formulation and its discrete-time counterparts has long been missing. This paper is to fill this gap by establishing information-theoretic extensions of the Shannon-Nyquist theorem, which naturally yield causality-preserving connections between continuous-time Gaussian feedback channels and their associated discrete-time versions in the forms of sampling and approximation theorems. As an example of the possible applications of the extensions, we use the above-mentioned connections to analyze the capacity of a continuous-time white Gaussian feedback channel.
\end{abstract}

\section{Introduction}  \label{intro}

Continuous-time Gaussian channels were considered at the very inception of information theory. In his celebrated paper~\cite{sh48} birthing information theory, Shannon studied the following continuous-time white Gaussian channels:
\begin{equation} \label{white-Gaussian-noise-channel}
Y(t)=X(t)+Z(t), \quad t \in \mathbb{R},
\end{equation}
where the stochastic process $\{X(t)\}$ is the {\em channel input} with average power limit $P$, $\{Z(t)\}$ is the white Gaussian noise with flat power spectral density $1$ and $\{Y(t)\}$ is the {\em channel output}. Shannon actually only considered the case that the channel has bandwidth limit $\omega$, namely, the support of the fourier transform of $\{X(t)\}$ is contained in $[-w, w]$. Using the celebrated Shannon-Nyquist sampling theorem~\cite{ny24, sh49}, the continuous-time channel (\ref{white-Gaussian-noise-channel}) can be equivalently represented by a parallel Gaussian channel:
\begin{equation} \label{sampled-white-Gaussian-noise-channel}
Y_{n}^{(\omega)}=X_{n}^{(\omega)}+Z_{n}^{(\omega)}, \quad n \in \mathbb{Z},
\end{equation}
where the noise process $\{Z_n^{(\omega)}\}$ is i.i.d. with variance $1$~\cite{co2006}. Regarding the ``space'' index $n$ as time, the above parallel channel can be interpreted as a discrete-time Gaussian channel associated with the continuous-time channel (\ref{white-Gaussian-noise-channel}). It is well known from the theory of discrete-time Gaussian channels that the capacity of the channel (\ref{sampled-white-Gaussian-noise-channel}) can be computed as
\begin{equation} \label{finite-bandwidth-capacity}
C^{(\omega)}=\omega \log \left(1+\frac{P}{2 \omega}\right).	
\end{equation}
Then, the capacity $C$ of the channel (\ref{white-Gaussian-noise-channel}) can be computed by taking the limit of the above expression as $\omega$ tends to infinity:
\begin{equation} \label{infinite-bandwidth-capacity}
C=\lim_{\omega \to \infty} C^{(\omega)}=P/2.
\end{equation}

The {\em sampling approach} consisting of (\ref{white-Gaussian-noise-channel})-(\ref{infinite-bandwidth-capacity}) as above, which serves as a link between the continuous-time channel (\ref{white-Gaussian-noise-channel}) and the discrete-time channel (\ref{sampled-white-Gaussian-noise-channel}), typifies a conventional way to examine continuous-time Gaussian channels: convert them into associated discrete-time Gaussian channels, for which we have ample ammunition at hands. Moments of reflection, however, reveals that the sampling approach for the channel capacity (with bandwidth limit or not) is heuristic in nature: For one thing, a bandwidth-limited signal cannot be time-limited, which renders it infeasible to define the data transmission rate if assuming a channel has bandwidth limit. In this regard, rigorous treatments coping with this issue and other technicalities can be found in~\cite{wy66, ga68}; see also~\cite{sl76} for a relevant in-depth discussion. Another issue is that, even disregarding the above technical nuisance arising from the bandwidth limit assumption, the sampling approach only gives a lower bound on the capacity of (\ref{white-Gaussian-noise-channel}): it shows that $P/2$ is achievable via a class of special coding schemes, but it is not clear that why transmission rate higher than $P/2$ cannot be achieved by other coding schemes. The capacity of (\ref{white-Gaussian-noise-channel}) was rigorously studied in~\cite{fo61, be62}, and a complete proof establishing $P/2$ as its capacity can be found in~\cite{as63,as64}.

Alternatively, the continuous-time white Gaussian channel (\ref{white-Gaussian-noise-channel}) can be examined~\cite{ih93} under the Brownian motion formulation:
\begin{equation} \label{Brownian-motion-channel}
Y(t)=\int_0^t X(s)ds + B(t), \quad t \in \mathbb{R}_+
\end{equation}
where slightly abusing the notation, we still use $\{Y(t)\}$ to denote the output corresponding to the input $\{X(s)\}$, and $\{B(t)\}$ denotes the standard Brownian motion. Here we remark that, the formulation in (\ref{Brownian-motion-channel}) is often regarded as the integral version of that in (\ref{white-Gaussian-noise-channel}) due to the long-held heuristic interpretation of a white Gaussian noise as the ``derivative'' of a Brownian motion, and via a routine orthonormal decomposition argument, both of the two channels are equivalent to a parallel channel consisting of infinitely many Gaussian sub-channels~\cite{as65}.

An immediate and convenient consequence of such a formulation is that many notions in discrete time, including mutual information and typical sets, carry over to the continuous-time setting, which will rid us of the nuisances arising from the bandwidth limit assumption. Indeed, such a framework yields a fundamental formula for the mutual information of the channel (\ref{Brownian-motion-channel})~\cite{Duncan70, ka71} and a clean and direct proof~\cite{ka71} that the capacity of (\ref{Brownian-motion-channel}) is $P/2$; moreover, as evidenced by numerous results collected in~\cite{ih93} on continuous-time Gaussian channels, the use of Brownian motions elevates the level of rigor of our treatment, and equip us with a wide range of established techniques and tools from stochastic calculus. Here we remark that Girsanov's theorem, one of the fundamental theorems in stochastic calculus, lays the foundation of our treatment. We refer to~\cite{li01,ih93}, where Girsanov's theorem (and its numerous variants) and its wide range of applications in information theory are discussed in great details.

Furthermore, the Brownian motion formulation is also versatile enough to accommodate feedback. Here we note that a continuous-time Gaussian channel as in (\ref{Brownian-motion-channel}) can be alternatively written as
\begin{equation} \label{g-formulation}
Y(t)=\int_0^t g(s, M) ds + B(t), \quad t \in \mathbb{R}_+,
\end{equation}
where $M$ is a random variable taking values in a finite alphabet $\mathcal{M}$, interpreted as the message to be transmitted through the channel, and $g$ is a real-valued deterministic function depending on $s \in \mathbb{R}_+$, $M \in \mathcal{M}$, interpreted as the channel input. The formulation in (\ref{g-formulation}) can be readily extended to model a continuous-time white Gaussian channel with feedback, or simply, Gaussian feedback channel, which is be characterized by the following stochastic differential equation (SDE)~\cite{ih93}:
\begin{equation} \label{feedback-memory}
Y(t)=\int_0^t g(s, M, Y_0^{s}) ds + B(t), \quad t \in \mathbb{R}_+,
\end{equation}
where the channel input $g$ also depends on $Y_0^s \triangleq \{Y(r): 0 \leq r \leq s\}$, the channel output up to time $s$ that is fed back to the sender, which will be referred to as the {\em channel feedback}. For obvious reasons, with a same set of constraints, the capacity of the channel (\ref{feedback-memory}) is greater than or equal to that of (\ref{g-formulation}), that is to say, feedback increases the capacity in general. On the other hand though, for much subtler reasons, with the average power limit $P$, the capacity of (\ref{feedback-memory}) is still $P/2$, namely, feedback does not help with the average power constraint~\cite{ka71}.

As opposed to the white Gaussian noise formulation, a feedback channel under the Brownian motion formulation can be naturally translated to the discrete-time setting: the pathwise continuity of a Brownian motion allows the inheritance of temporal causality when the channel is sampled (see Section~\ref{sampling-theorems}) or approximated (see Section~\ref{approximation-theorems}). On the other hand, the white Gaussian noise formulation is facing inherent difficulty as far as inheriting temporal causality is concerned: in converting (\ref{white-Gaussian-noise-channel}) to (\ref{sampled-white-Gaussian-noise-channel}), while $X_n^{(w)}$ are obtained as ``time'' samples of $X(t)$, $Z_n^{(w)}$ are in fact ``space'' samples of $Z(t)$, as they are merely the coefficients of the (extended) Karhunen-Loeve decomposition of $Z(t)$~\cite{ge59, huang, johnson}.

On the other hand though, as opposed to the white Gaussian noise formulation, a link that establishes the information-theoretic connection between the continuous-time channel (\ref{feedback-memory}) and its discrete-time counterparts has long been missing, which may explain why discrete-time and continuous-time information theory (under the Brownian motion formulation) have largely gone separate ways with little interaction for the past several decades. In this paper, we will fill this gap by establishing
information-theoretic extensions of the Shannon-Nyquist theorem, which naturally give causality-preserving connections between continuous-time Gaussian feedback channels and their associated discrete-time versions. We believe the extensions will serve as the above-mentioned missing links and play important roles in the long run for further developing continuous-time information theory, particularly for the communication scenarios when feedback is present.

The remainder of the paper is organized as follows. In Section~\ref{notations}, we introduce our notations and recall some basic notions and facts that will be used in our proofs. In Section~\ref{sampling-theorems}, we prove Theorems~\ref{sampling-theorem}, a sampling theorem for a continuous-time Gaussian feedback channel, which naturally connect such a channel with their sampled discrete-time versions. And in Section~\ref{approximation-theorems}, we prove Theorem~\ref{approximation-theorem}, the so-called approximation theorem, which connects a continuous-time Gaussian feedback channel with its approximated discrete-time versions (in the sense of the Euler-Maruyama approximation~\cite{kl92}). Roughly speaking, a sampling theorem says that a time-sampled channel is ``close'' to the original channel if the sampling is fine enough, and an approximation theorem says that an approximated channel is ``close'' to the original channel if the approximation is fine enough, both in an information-theoretic sense. Note that, as elaborated in Remark~\ref{sampling-approximation-theorems}, the approximation theorem boils down to the sampling theorem when there is no feedback in the channel. In Section~\ref{multi-user-Gaussian-channels}, as an example of the possible applications of the extensions, we use Theorem~\ref{approximation-theorem} to give alternative derivation of the capacity of the channel (\ref{feedback-memory}).

\section{Notations and Preliminaries} \label{notations}

We use $(\Omega,\mathcal{F},\PX)$ to denote the underlying probability space, and $\EX$ to denote the expectation with respect to the probability measure $\PX$. As is typical in the theory of SDEs, we assume the probability space is equipped with a filtration $\{\mathcal{F}_t: 0 \leq t < \infty\}$, which satisfies the {\em usual conditions}~\cite{ka91} and is rich enough to accommodate the standard Brownian motion $\{B(t): 0 \leq t < \infty\}$. Throughout the paper, we will use uppercase letters (e.g., $X$, $Y$, $Y^{(n)}$) to denote random variables, and their lowercase counterparts (e.g., $x$, $y$, $y^{(n)}$) to denote their realizations.

Let $C[0, \infty)$ denote the space of all continuous functions over $[0, \infty)$, and for any $t > 0$, let $C[0, t]$ denote the space of all continuous functions over $[0, t]$. As usual, we will equip the space $C[0, \infty)$ with the filtration $\{\mathcal{B}_t\}_{0 \leq t < \infty}$, where $\mathcal{B}_{\infty}$ denotes the standard Borel $\sigma$-algebra on the space $C[0, \infty)$ and $\mathcal{B}_t = \pi_t^{-1}(\mathcal{B}_{\infty})$, where $\pi_t : C[0, \infty) \to C[0, t]$ is given by the map $(\pi_tx)(s) = x(t\wedge s)$.

For any $\varphi \in C[0, \infty)$, we use $\varphi(\{t_1, t_2, \dots, t_m\})$ to denote $\{\varphi(t_1), \varphi(t_2), \dots, \varphi(t_n)\}$ and $\varphi_0^t$ to denote $\{\varphi(s): 0 \leq s \leq t\}$. The sup-norm of $\varphi_0^t$, denoted by $\|\varphi_0^t\|$, is defined as $\|\varphi_0^t\| = \sup_{0 \leq s \leq t} |\varphi(s)|$; and similarly, we define $\|\varphi_0^t-\psi_0^t\| \triangleq \sup_{0 \leq s \leq t} |\varphi(s)-\psi(s)|$. For any $\varphi, \psi \in C[0, \infty)$, slightly abusing the notation, we define $\|\varphi_0^s-\psi_0^t\| \triangleq \|\hat{\varphi}_0^{\infty}-\hat{\psi}_0^{\infty}\|$, where $\hat{\varphi}, \hat{\psi} \in C[0, \infty)$ are ``stopped'' versions of $\varphi, \psi$ at time $s, t$, respectively, with $\hat{\varphi}(r)=\varphi(r \wedge s)$ and $\hat{\psi}(r)=\psi(r \wedge t)$.

For any two probability measures $\mu$ and $\nu$, we write $\mu\sim\nu$ to mean they are equivalent, namely, $\mu$ is absolutely continuous with respect to $\nu$ and vice versa. For any two processes $X_0^t = \{X(s); 0 \leq s \leq t\}$ and $Y_0^t =\{Y(s); 0 \leq s \leq t\}$, we use $\mu_{X_0^t}$ and $\mu_{Y_0^t}$ to denote the probability distributions on $\mathcal{B}_t$ induced by $X_0^t$ and $Y_0^t$, respectively; and if $\mu_{Y_0^t}$ is absolutely continuous with respect to $\mu_{X_0^t}$, we write the Radon-Nikodym derivative of $\mu_{Y_0^t}$ with respect to $\mu_{X_0^t}$ as $d\mu_{Y_0^t}/d\mu_{X_0^t}$. We use $\mu_{Y_0^t|Z=z}$ denote the probability distribution on $\mathcal{B}_t$ induced by $Y_0^t$ given $Z=z$, and $d\mu_{Y_0^t|Z=z}/d\mu_{X_0^t|Z=z}$ to denote the Radon-Nikodym derivative of $Y_0^t$ with respect to $X_0^t$ given $Z=z$. Obviously, when $Z$ is independent of $X$, $d\mu_{Y_0^t|Z=z}/d\mu_{X_0^t|Z=z}= d\mu_{Y_0^t|Z=z}/d\mu_{X_0^t}$.

We next present some basic notions and facts from information theory and introduce the corresponding notations. For more comprehensive expositions, we refer to~\cite{co2006, ih93}.

Let $X, Y, Z$ be random variables defined on the probability space $(\Omega,\mathcal{F},\PX)$, which will be used to illustrate most of the notions and facts in this section (note that the same notations may have different connotations in other sections). Particularly in this paper, random variables can be discrete-valued with a probability mass function, real-valued with a probability density function or path-valued (more precisely, $C[0, \infty)$-valued or $C[0, t]$-valued).

By definition, for $\EX[X|\sigma(Y, Z)]$, the conditional expectation of $X$ with respect to the $\sigma$-algebra generated by $Y$ and $Z$, there exists a $\sigma(Y) \otimes \sigma(Z)$-measurable function $\Psi(\cdot, \cdot)$ such that $\Psi(Y, Z)=\EX[X|\sigma(Y, Z)]$.
For notational convenience, we will in this paper simply write $\EX[X|\sigma(Y, Z)]$ as $\EX[X|Y, Z]$, and $\Psi(y, z)$ as $\EX[X|y, z]$ and furthermore, $\Psi(Y, z)$ as $\EX[X|Y, z]$.

A {\em partition} of the probability space $(\Omega,\mathcal{F},\PX)$ is a disjoint collection of elements of $\mathcal{F}$ whose union is $\Omega$. It is well known there is a one-to-one correspondence between finite partitions and finite sub-$\sigma$-algebras of $\mathcal{F}$. For a finite sub-$\sigma$-algebra $\mathcal{H} \subset \mathcal{F}$, let $\eta(\mathcal{H})$ denote the corresponding finite partition. The entropy of a finite partition $\xi=\{A_1, A_2, \cdots, A_m\}$, denoted by $H(\xi)$, is defined as $H(\xi)=\sum_{i=1}^m -\PX(A_i)\log \PX(A_i)$, whereas the conditional entropy of $\xi$ given another finite partition $\zeta=\{B_1, B_2, \dots, B_n\}$, denoted by $H(\xi|\zeta)$, is defined as $H(\xi|\zeta)= \sum_{j=1}^n \sum_{i=1}^m -\PX(A_i \cap B_j) \log \PX(A_i|B_j)$. The mutual information between the above-mentioned two partitions $\xi$ and $\zeta$, denoted by $I(\xi; \zeta)$, is defined as $I(\xi; \zeta) = \sum_{j=1}^n \sum_{i=1}^m -\PX(A_i \cap B_j) \log \PX(A_i \cap B_j)/\PX(A_i) \PX(B_j)$.

For the random variable $X$, we define
$$
\eta(X) \triangleq \{\eta(\mathcal{H}): \mathcal{H} \mbox{ is a finite sub-}\sigma\mbox{-algebra of } \sigma(X)\}.
$$
The {\em entropy} of the random variable $X$, denoted by $H(X)$, is defined as
$$
H(X) \triangleq \sup_{\xi \in \eta(X)} H(\xi).
$$
The {\em conditional entropy} of $Y$ given $X$, denoted by $H(Y|X)$, is defined as
$$
H(Y|X) = \inf_{\xi \in \eta(X)} \sup_{\zeta \in \eta(Y)} H(\zeta|\xi).
$$
Here, we note that if $X$ and $Y$ are independent, then obviously it holds that 
\begin{equation} \label{Y-X-Y}
H(Y|X)=H(Y).
\end{equation}
The {\em mutual information} between $X$ and $Y$, denoted by $I(X; Y)$, is defined as
$$
I(X; Y) = \sup_{\xi \in \eta(X), \; \zeta \in \eta(Y)} I(\xi; \zeta).
$$
A couple of properties of mutual information are in order. First, it can be shown, via a concavity argument, that the mutual information is always non-negative. Second, the mutual information is determined by the $\sigma$-algebras generated by the corresponding random variables; more specifically, for any random variables $X', Y', X'', Y''$,
\begin{equation} \label{I=I}
I(X'; Y')=I(X'; Y'') \mbox{ if } \sigma(X') = \sigma(X'') \mbox{ and } \sigma(Y')=\sigma(Y'')
\end{equation}
and
\begin{equation} \label{I<=I}
I(X'; Y') \leq I(X'; Y'') \mbox{ if } \sigma(X') \subset \sigma(X'') \mbox{ and }  \sigma(Y') \subset \sigma(Y'').
\end{equation}
For a quick example, we have $I(X; Y)=I(X, X; Y, Y+X)$ and $I(X; Y) \leq I(X; Y, Z)$.

It turns out that for the case that $X, Y, Z$ are all discrete random variables, all the above-mentioned notions are well-defined and can be computed rather explicitly: $H(X)$ can be computed as $H(X) = \EX[-\log p_X(X)]$, where $p_X(\cdot)$ denotes the probability mass function of $X$; $H(Y|X)$ can be computed as $H(Y|X) = \EX[-\log p_{Y|X}(Y|X)]$, where $p_{Y|X}(\cdot|\cdot)$ denotes the conditional probability mass function of $Y$ given $X$; $I(X; Y)$ can be computed as
\begin{equation} \label{mutual-information}
I(X; Y) = \EX\left[\log \frac{p_{Y|X}(Y|X)}{p_Y(Y)}\right].
\end{equation}
The mutual information is intimately related to entropy. As an example, one verifies that
\begin{equation} \label{I-H}
I(X; Y)=H(Y)-H(Y|X).
\end{equation}
Note that the quality (\ref{I-H}) may fail if non-discrete random variables are involved, since the corresponding entropies $H(Y)$ and $H(Y|X)$ can be infinity. For the case of real-valued random variables with density, this issue can be circumvented using the notion of differential entropy, as elaborated below.

Now, let $Y$ be a real-valued random variable with probability density function $f_Y(\cdot)$. The {\em differential entropy} of $Y$, denoted by $h(Y)$, is defined as $h(Y)= \EX[-\log f_Y(Y)]$. And the {\em differential conditional entropy} of $Y$ given a finite partition $\xi$, denoted by $h(Y|\zeta)$, is defined as $h(Y|\zeta) = \sum_{j=1}^n \PX(A_i) \int f_{Y|A_i}(x) \log f_{Y|A_i}(x) dx$. The {\em differential conditional entropy} of $Y$ given $X$ (which may not be real-valued), denoted by $h(Y|X)$, is defined as $h(Y|X)=\inf_{\xi \in \eta(X)} h(Y|\xi)$; in particular, if the conditional probability density function $f_{Y|X}(\cdot|\cdot)$ exists, then $h(Y|X)$ can be explicitly computed as $\EX[-\log f_{Y|X}(Y|X)]$. As mentioned before, the aforementioned failure of (\ref{I-H}) can be salvaged with the notion of differential entropy:
\begin{equation} \label{I-h}
I(X; Y)=h(Y)-h(Y|X).
\end{equation}
Note that this equality, together with the fact that the mutual information is non-negative, immediately implies that
\begin{equation} \label{reduces}
h(Y|X) \leq h(Y);
\end{equation}
in other words, conditioning reduces the differential entropy (the same statement holds for entropy, which however is not needed in this paper). We will use the fact that for a given variance, a Gaussian distribution maximizes the differential entropy. More precisely, for a real-valued random variable $Y$ with variance less than or equal to $\sigma^2$, we have
\begin{equation} \label{maximizes}
h(X) \leq h(N) = \frac{1}{2} \log (2 \pi e \sigma^2),
\end{equation}
where $N$ is a Gaussian random variable with mean $0$ and variance $\sigma^2$. 

Here we emphasize that all the above-mentioned definitions naturally carry over to the setting where some/all of random variables are vector-valued. For a quick example, let $Y = \{Y_1, Y_2, \dots, Y_n\}$, where each $Y_i$ is a real-valued random variable with density. Then, the differential entropy $h(Y)$ of $Y$ is defined as
$$
h(Y)=h(Y_1, Y_2, \dots, Y_n) \triangleq \EX[- \log f_{Y_1, Y_2, \dots, Y_n}(Y_1, Y_2, \dots, Y_n)],
$$
where $f_{Y_1, Y_2, \dots, Y_n}$ is the joint probability density function of $Y_1, Y_2, \dots, Y_n$. The {\em chain rule} for differential entropy states that
\begin{equation} \label{chain-rule}
h(Y_1, Y_2, \dots, Y_n) = h(Y_1) + h(Y_2|Y_1)+ \dots+ h(Y_n|Y_1, Y_2, \dots, Y_{n-1}),
\end{equation}
whose conditional version given a random variable $X$ reads
\begin{equation} \label{conditional-chain-rule}
h(Y_1, Y_2, \dots, Y_n|X) = h(Y_1|X) + h(Y_2|X, Y_1)+ \dots+ h(Y_n|X, Y_1, Y_2, \dots, Y_{n-1}).
\end{equation}

Consider the point-to-point continuous-time white Gaussian feedback channel in (\ref{feedback-memory}), the mutual information of which over the time interval $[0, T]$ can be computed as below (see, e.g., ~\cite{pi64,ih93}):
\begin{equation} \label{definition-mutual-information}
I(M; Y_0^T)=\begin{cases}
\EX\left[\log \frac{d \mu_{M, Y_0^T}}{d \mu_{M} \times \mu_{Y_0^T}}(M, Y_0^T)\right], &\mbox{ if } \frac{d \mu_{M, {Y_0^T}}}{d \mu_{M} \times \mu_{Y_0^T}} \mbox{ exists },\\
\infty, &\mbox{ otherwise },
\end{cases}
\end{equation}
where $d \mu_{M, Y_0^T}/d \mu_M \times \mu_{Y_0^T}$ denotes the Radon-Nikodym derivative of $\mu_{M, Y_0^T}$ with respect to $d \mu_M \times \mu_{Y_0^T}$. It turns out that the mutual information is intimately connected to the channel capacity, detailed below.

For $T, R, P> 0$, a $(T, e^{T R}, P)$-{\em code} for the above-mentioned continuous-time Gaussian channel consists of a set of integers $\mathcal{M}=\{1, 2, \ldots, e^{T R}\}$, the {\it message set} for receiver, and an {\it encoding function}, $g: \mathcal{M} \rightarrow C[0, T]$, which satisfies the following average power constraint: with probability $1$,
\begin{equation} \label{PowerConstraint-BC}
\frac{1}{T} \int_0^T \EX[g^2(s, M, Y_0^s)] ds \leq P,
\end{equation}
and a {\it decoding functions}, $h: C[0, T] \rightarrow \mathcal{M}$. The {\em average probability of error} for the $(T, e^{T R}, P)$-code is defined as
$$
P_e^{(T)}=\frac{1}{e^{T R}} \sum_{M \in \mathcal{M}} \mathbb{P}(h(Y_0^T) \neq M~|~M \mbox{ sent}).
$$
A rate $R$ is said to be {\em achievable} for the channel if there exists a sequence of $(T, e^{T R}, P)$-codes with $P_e^{(T)} \rightarrow 0$ as $T \rightarrow \infty$. The {\em capacity} $\mathcal{C}$ of the channel is the supremum of all such achievable rates.

The celebrated Shannon's channel coding theorem~\cite{sh48} states that
$$
\mathcal{C}= \lim_{T \to \infty} \frac{1}{T} \sup_{M, g} I(M; Y_0^T),
$$
where the supremum is over all choices of the message $M$ (including its alphabet and distribution), and the encoding function $g$. As mentioned in Section~\ref{intro}, the capacity of the channel (\ref{feedback-memory}) is $P/2$, the same as that of the channel (\ref{g-formulation}), where the feedback is absent.

\section{Information-Theoretic Extensions} \label{extensions}

In this section, we will establish a sampling and approximation theorem for the channel (\ref{feedback-memory}) restricted to the time interval $[0, T]$:
\begin{equation} \label{feedback-memory-T}
Y(t)=\int_0^t g(s, M, Y_0^{s}) ds + B(t), \quad t \in [0, T],
\end{equation}
which naturally connect such a channel with its discrete-time versions obtained by sampling and approximation.

The following regularity conditions may be imposed to our channel (\ref{feedback-memory-T}):
\begin{itemize}
\item[(a)] The solution $\{Y(t)\}$ to the stochastic differential equation (\ref{feedback-memory-T}) uniquely exists.
\item[(b)]
$$
\PX\left(\int_0^T g^2(t, M, Y_0^t) dt < \infty \right)=\PX\left(\int_0^T g^2(t, M, B_0^t) dt < \infty \right)=1.
$$
\item[(c)]
$$
\int_0^T \EX[|g(t, M, Y_0^t)|] dt < \infty.
$$
\item[(d)] \textbf{The uniform Lipschitz condition:} There exists a constant $L > 0$ such that for any $0 \leq s_1, s_2, t_1, t_2 \leq T$, any $Y_0^T, Z_0^T$,
$$
|g(s_1, M, Y_0^{s_2})-g(t_1, M, Z_0^{t_2})| \leq L (|s_1-t_1|+ \|Y_{0}^{s_2}- Z_0^{t_2}\|).
$$

\item[(e)] \textbf{The uniform linear growth condition:} There exists a constant $L > 0$ such that for any $M$ and any $Y_0^T$,
$$
|g(t, M, Y_0^t)| \leq L (1+\|Y_0^t\|).
$$
\end{itemize}

The following lemma says that Conditions (d)-(e) are stronger than Conditions (a)-(c). We remark that Conditions (d)-(e) are still rather mild assumptions: conditions of similar nature are typically imposed to guarantee the existence and uniqueness of the solution to a given stochastic differential equation, and it is possible that the solution may not uniquely exist if these two conditions are violated (see, e.g.,~\cite{mao97}).
\begin{lem} \label{improved-liptser-1}
Assume Conditions (d)-(e). Then, there exists a unique strong solution of (\ref{feedback-memory}) with initial value $Y(0)=0$. Moreover, there exists $\varepsilon > 0$ such that
\begin{equation} \label{exponential-finiteness-1}
\EX [e^{\varepsilon \|Y_0^T\|^2}] < \infty,
\end{equation}
which immediately implies Conditions (b) and (c).
\end{lem}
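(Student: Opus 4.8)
The plan is to invoke the standard existence–uniqueness theory for SDEs with functional (path-dependent) coefficients, then upgrade the moment bound to the exponential-integrability statement \eqref{exponential-finiteness-1}. For the first part, observe that under Condition (d) the drift functional $b(t,\varphi) \triangleq g(t, M, \varphi_0^t)$ is uniformly Lipschitz in the path argument with respect to the sup-norm, and under Condition (e) it satisfies linear growth; these are exactly the hypotheses under which the Picard iteration argument (see, e.g., \cite{mao97} or \cite{ka91}) produces a unique strong solution of $Y(t) = \int_0^t g(s, M, Y_0^s)\,ds + B(t)$ on $[0,T]$ with $Y(0)=0$. Since $M$ takes values in a finite alphabet, one may simply run this argument conditionally on each value of $M$. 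This also gives, as a by-product of Picard iteration, the polynomial moment bound $\EX\|Y_0^T\|^{p} < \infty$ for every $p \geq 1$; but that is not quite what we want.

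The heart of the matter is the exponential bound. First I would write $\|Y_0^t\| \leq \|B_0^t\| + \int_0^t |g(s,M,Y_0^s)|\,ds \leq \|B_0^t\| + LT + L\int_0^t \|Y_0^s\|\,ds$ using Condition (e), and then apply Gronwall's inequality pathwise to get $\|Y_0^T\| \leq (\|B_0^T\| + LT)e^{LT} =: K_1\|B_0^T\| + K_2$ for deterministic constants $K_1, K_2$ depending only on $L$ and $T$. Squaring, $\|Y_0^T\|^2 \leq 2K_1^2\|B_0^T\|^2 + 2K_2^2$, so $\EX[e^{\varepsilon\|Y_0^T\|^2}] \leq e^{2\varepsilon K_2^2}\,\EX[e^{2\varepsilon K_1^2\|B_0^T\|^2}]$. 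It therefore suffices to recall that $\EX[e^{\lambda \|B_0^T\|^2}] < \infty$ for $\lambda$ small enough: this is a classical consequence of the reflection principle, since $\|B_0^T\|$ has Gaussian tails ($\PX(\|B_0^T\| > a) \leq 4\PX(B(T) > a)$ by reflection, giving a sub-Gaussian tail with variance parameter $T$), and a random variable with sub-Gaussian tails has a finite exponential-square moment up to a threshold; concretely any $\lambda < 1/(2T)$ works. Choosing $\varepsilon < 1/(4K_1^2 T)$ then yields \eqref{exponential-finiteness-1}.

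Finally, \eqref{exponential-finiteness-1} immediately implies Conditions (b) and (c): since $\EX[e^{\varepsilon\|Y_0^T\|^2}] < \infty$ forces $\|Y_0^T\| < \infty$ a.s., Condition (e) gives $\int_0^T g^2(t,M,Y_0^t)\,dt \leq \int_0^T L^2(1+\|Y_0^t\|)^2\,dt \leq L^2 T (1+\|Y_0^T\|)^2 < \infty$ a.s.; the same bound holds with $Y_0^t$ replaced by $B_0^t$ (using $\EX\|B_0^T\|^2 < \infty$), which is the first displayed equality in Condition (b). For Condition (c), $\int_0^T \EX|g(t,M,Y_0^t)|\,dt \leq L\int_0^T \EX[1+\|Y_0^t\|]\,dt \leq LT(1+\EX\|Y_0^T\|) < \infty$, where $\EX\|Y_0^T\| < \infty$ follows from \eqref{exponential-finiteness-1} (or from the polynomial moment bound above). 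I expect the only genuinely delicate point to be the bookkeeping around the Gronwall step — making sure the linear-growth constant is handled correctly so that the resulting $K_1$ is an honest deterministic constant independent of $\omega$ — after which the exponential-square moment of the Brownian sup is a routine citation.
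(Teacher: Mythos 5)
Your proposal is correct and follows essentially the same route as the paper: Condition (e) plus a pathwise Gronwall argument to get $\|Y_0^T\| \leq K_1\|B_0^T\| + K_2$, then reduction to the exponential-square integrability of $\|B_0^T\|$ for small $\varepsilon$. The only (immaterial) difference is that you justify $\EX[e^{\lambda\|B_0^T\|^2}]<\infty$ via the reflection principle, whereas the paper uses Doob's submartingale inequality $\EX[\sup_{0\leq t\leq T}e^{cB(t)^2}]\leq 4\,\EX[e^{cB(T)^2}]$; both are standard and give the same threshold on $\lambda$.
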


\begin{proof}
With Conditions (d)-(e), the proof of the existence and uniqueness of the solution to (\ref{feedback-memory}) is somewhat standard; see, e.g., Section $5.4$ in~\cite{mao97}.
So, in the following, we will only prove (\ref{exponential-finiteness-1}).

For the stochastic differential equation (\ref{feedback-memory}), applying Condition (e), we deduce that there exists $L_1 > 0$ such that
$$
\|Y_0^T\| \leq \int^T_0 L_1(1+ \|Y_0^t\|) dt+ \|B_0^T\| \leq L_1 T + \|B_0^T\| + \int_0^T L_1 \|Y_0^t\| dt.
$$
Then, applying the Gronwall inequality followed by a straightforward bounding analysis, we deduce that there exists $L_2 > 0$ such that
$$
\|Y_0^T\| \leq (L_1 T+ \|B_0^T\|) e^{\int_0^T L_1 dt} = e^{L_1 T} (L_1 T + \|B_0^T\|) \leq L_2 + L_2 \|B_0^T\|.
$$
Now, for any $\varepsilon > 0$, applying Doob's submartingale inequality, we have
\begin{align*}
\EX[e^{\varepsilon \|Y_0^T\|^2}] & \leq \EX[e^{\varepsilon(L_2  + L_2 \|B_0^T\|)^2}] \\
&\leq \EX[e^{2\varepsilon(L_2^2  + L_2^2 \|B_0^T\|^2)}] \\
&=e^{2\varepsilon L_2^2}  \EX[e^{2 \varepsilon L_2^2 \|B_0^T\|^2}]\\
&=e^{2\varepsilon L_2^2}  \EX[\sup\nolimits_{0\leq t\leq T} e^{2 \varepsilon L_2^2 B(t)^2}]\\
&\leq 4 e^{2\varepsilon L_2^2} \EX[e^{2 \varepsilon L_2^2 B(T)^2}],
\end{align*}
which is finite provided that $\varepsilon$ is small enough.

\end{proof}

For any $n \in \mathbb{N}$, choose time points $t^{(n)}_0, t^{(n)}_1, \ldots, t^{(n)}_n \in \mathbb{R}$ such that
$$
0=t^{(n)}_0 < t^{(n)}_1 < \ldots < t^{(n)}_n =T,
$$
and we define
$$
\Delta_n \triangleq \{t^{(n)}_0, t^{(n)}_1, \dots, t^{(n)}_n\}.
$$
For any time point sequence $\Delta_n$, we will use $\delta_{\Delta_n}$ to denote its maximal stepsize, namely,
$$
\delta_{\Delta_n} \triangleq \max_{i=1, 2, \dots, n} (t^{(n)}_i-t^{(n)}_{i-1}).
$$
$\Delta_n$ is said to be {\it evenly spaced} if $t^{(n)}_i-t^{(n)}_{i-1}=T/n$ for all feasible $i$, and we will use the shorthand notation $\delta_n$ to denote its stepsize, i.e., $\delta_n \triangleq t^{(n)}_1-t^{(n)}_0=T/n$. Apparently, evenly spaced sampling time sequences are natural candidates with respect to which a continuous-time Gaussian channel can be sampled.

In the following, we will present our extensions of the Shannon-Nyquist sampling theorem vis a sampling or approximation with respect to $\Delta_n$.

\subsection{Extension by Sampling} \label{sampling-theorems}

In this section, we will establish a sampling theorem for the channel (\ref{feedback-memory}), which naturally connect such a channel with their discrete-time versions obtained by sampling.

Sampling the channel (\ref{feedback-memory}) over the time interval $[0, T]$ with respect to $\Delta_n$, we obtain its sampled discrete-time version as follows:
\begin{equation}  \label{after-sampling-with-feedback}
Y(t^{(n)}_i)=\int_0^{t^{(n)}_i} g(s, M, Y_0^s) ds + B(t^{(n)}_i), \quad i=0, 1, \ldots, n.
\end{equation}
Roughly speaking, the following sampling theorem states that for any sequence of ``increasingly finer'' samplings, the mutual information of the sampled discrete-time channel (\ref{after-sampling-with-feedback}) will converge to that of the original channel (\ref{feedback-memory-T}).
\begin{thm} \label{sampling-theorem}
1) Assume Conditions (a)-(c). Suppose that $\Delta_n \subset \Delta_{n+1}$ for all $n$ and that $\delta_{\Delta_n} \to 0$ as $n$ tends to infinity. Then, we have
$$
\lim_{n \to \infty} I(M; Y(\Delta_n))=I(M; Y_0^T),
$$
where we recall from Section~\ref{notations} that $Y(\Delta_n) = \{Y(t^{(n)}_0), Y(t^{(n)}_1), \ldots, Y(t^{(n)}_n)\}$.

2) Assume Conditions (d)-(e). Suppose that $\{\Delta_n\}$ with $\delta_{\Delta_n} \to 0$ as $n$ tends to infinity. Then, we have
$$
\lim_{n \to \infty} I(M; Y(\Delta_n))=I(M; Y_0^T).
$$
\end{thm}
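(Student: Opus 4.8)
The plan is to prove part 1) first (the nested, $\delta_{\Delta_n}\to 0$ case under Conditions (a)--(c)) and then deduce part 2) from it. For part 1), the key structural fact is monotone convergence of mutual information along a refining sequence of $\sigma$-algebras. Since $\Delta_n\subset\Delta_{n+1}$, we have $\sigma(Y(\Delta_n))\subset\sigma(Y(\Delta_{n+1}))\subset\sigma(Y_0^T)$, and by \eqref{I<=I} the sequence $I(M;Y(\Delta_n))$ is nondecreasing and bounded above by $I(M;Y_0^T)$; hence the limit $L \triangleq \lim_n I(M;Y(\Delta_n))$ exists and satisfies $L\le I(M;Y_0^T)$. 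For the reverse inequality, I would invoke the standard result (a martingale-type convergence theorem for mutual information, e.g.\ via the Gelfand--Yaglom--Perez formula, or the fact that $I(M;Y_0^T)=\sup_{\xi}I(M;\xi)$ over finite measurable partitions $\xi$ of $C[0,T]$) together with the observation that $\bigcup_n\sigma(Y(\Delta_n))$ generates $\mathcal{B}_T$ because $\delta_{\Delta_n}\to 0$ and paths in $C[0,T]$ are uniformly continuous: any cylinder-type event, and hence any generator of $\mathcal{B}_T$, can be approximated by events measurable with respect to finitely many coordinates $Y(t_i^{(n)})$. Combining, $I(M;Y_0^T)=\sup_n I(M;Y(\Delta_n))=L$, which finishes part 1).

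The main obstacle, and the reason Conditions (a)--(c) are needed rather than just measure-theoretic abstractions, is ensuring that the relevant Radon--Nikodym derivatives exist and behave well so that the continuity/approximation step is valid and $I(M;Y_0^T)$ is genuinely the supremum over finite sub-$\sigma$-algebras. Under (a)--(c), Girsanov's theorem applies: $\mu_{Y_0^T|M=m}\sim\mu_{B_0^T}$ with an explicit exponential martingale density, so $I(M;Y_0^T)<\infty$ can be written via Duncan's formula as $\tfrac12\int_0^T\EX[(g(s,M,Y_0^s)-\EX[g(s,M,Y_0^s)\mid Y_0^s])^2]\,ds$; the finite-dimensional mutual informations $I(M;Y(\Delta_n))$ likewise admit densities (the increments $Y(t_i^{(n)})-Y(t_{i-1}^{(n)})$ have conditional Gaussian-type densities given $M$ by the same Girsanov change of measure restricted to $\mathcal{B}_{t_i^{(n)}}$). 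Verifying the convergence then amounts to a uniform-integrability argument on $\log(d\mu_{M,Y_0^T}/d\mu_M\times\mu_{Y_0^T})$ and its finite-dimensional conditional expectations — this is where the real work lies, and where one may cite the classical continuous-parameter information-convergence results (e.g.\ Pinsker, or Ihara's treatment) rather than redo them.

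For part 2), the sampling sequences $\{\Delta_n\}$ are \emph{not} assumed nested, so the monotonicity argument is unavailable directly. The plan is to reduce to part 1): given an arbitrary sequence $\Delta_n$ with $\delta_{\Delta_n}\to 0$, I would first show that Conditions (d)--(e) imply Conditions (a)--(c) via Lemma~\ref{improved-liptser-1}, so part 1) is in force for \emph{any} nested refining sequence. Then, for the non-nested case, I would sandwich: construct, for each $n$, a nested refining sequence $\{\tilde\Delta_k\}$ that contains $\Delta_n$ as one of its members (e.g.\ successively take common refinements, or dyadic refinements together with the points of $\Delta_n$), apply part 1) along $\{\tilde\Delta_k\}$, and use \eqref{I<=I} to trap $I(M;Y(\Delta_n))$ between $I(M;Y(\Delta_{n'}))$ for a coarser nested stage and $I(M;Y_0^T)$. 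The exponential moment bound \eqref{exponential-finiteness-1} is what guarantees the uniform integrability needed to make these comparisons quantitative and to rule out that $I(M;Y(\Delta_n))$ stalls below $I(M;Y_0^T)$; the continuity of paths again ensures $\sigma(Y(\Delta_n))$ resolves $\mathcal{B}_T$ in the limit regardless of nesting. I expect the sandwiching/common-refinement bookkeeping to be the only genuinely new ingredient beyond part 1).
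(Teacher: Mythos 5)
Your part 1) is a legitimate alternative route: monotonicity of $I(M;Y(\Delta_n))$ under nesting via (\ref{I<=I}), plus the classical continuity of mutual information along an increasing family of $\sigma$-algebras (Pinsker/Dobrushin), plus the observation that path continuity and $\delta_{\Delta_n}\to 0$ make $\bigvee_n\sigma(Y(\Delta_n))=\sigma(Y_0^T)$. The paper instead works directly with the Radon--Nikodym derivatives $d\mu_{Y|M}/d\mu_B$ and $d\mu_Y/d\mu_B$ supplied by Girsanov-type results (Lemma 7.7 and Theorem 7.14 of Liptser--Shiryaev), identifies the sampled densities as conditional expectations $\EX[e^{A_1(M,Y_0^T)}\mid Y(\Delta_n),M]$, and invokes Doob's martingale convergence followed by dominated convergence. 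Your route is softer and arguably cleaner for part 1) alone; the paper's route has the advantage that the explicit exponential representations it sets up are exactly what part 2) and the approximation theorem reuse.

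Part 2) has a genuine gap. Your plan is to embed each $\Delta_n$ into a nested refining family and then ``trap'' $I(M;Y(\Delta_n))$ between $I(M;Y(\Delta_{n'}))$ for a coarser nested stage and $I(M;Y_0^T)$ using (\ref{I<=I}). But (\ref{I<=I}) requires actual inclusion of $\sigma$-algebras, and a non-nested $\Delta_n$ with small mesh need not contain \emph{any} fixed finite set of time points: for example, the $\Delta_n$ could consist entirely of irrational times while your nested reference family is dyadic, in which case $\sigma(Y(\Delta_n))$ contains no $\sigma(Y(\tilde\Delta_k))$ and no lower bound on $I(M;Y(\Delta_n))$ follows. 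Likewise, without nesting there is no limiting $\sigma$-algebra for the $\sigma(Y(\Delta_n))$ to ``resolve,'' so the claim that path continuity alone rescues the argument is not meaningful. What is actually needed — and what constitutes essentially all of the paper's proof of 2) — is a quantitative statement that conditioning on $Y(\Delta_n)$ asymptotically recovers the log-likelihood ratio: one replaces $g(s,M,Y_0^s)$ by its piecewise-constant version $\bar g_{\Delta_n}$ built from the sampled path, uses Conditions (d)--(e) to bound $|g-\bar g_{\Delta_n}|$ by $\delta_{\Delta_n}$, $\|Y_0^T\|$ and Brownian increments, controls everything in $L^2$ via the It\^o isometry and the exponential moment bound of Lemma~\ref{improved-liptser-1}, and finally removes the boundedness assumption on $\int_0^T g^2$ by a stopping-time truncation $\tau_k$ and a subsequence argument. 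None of this is implied by uniform integrability of the log-density, which by itself cannot rule out that $I(M;Y(\Delta_n))$ stalls strictly below $I(M;Y_0^T)$. So the reduction of 2) to 1) as you describe it does not go through, and the analytic estimates have to be supplied.
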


\begin{proof}
The proof is rather technical and lengthy, and thereby postponed to Section~\ref{proof-sampling-theorem}.
\end{proof}

\begin{rem}
Regarding the assumptions of Theorem~\ref{sampling-theorem}, Conditions (a)-(c) in 1) are rather weak, but the condition that ``$\Delta_n \subset \Delta_{n+1}$ for all $n$'' is somewhat restrictive, which, in particular, is not satisfied by the set $\{\Delta_n\}$ of all evenly spaced time point sequences. As shown in 2), this condition can be removed at the expenses of the extra regularity conditions (d)-(e): The same theorem holds as long as the stepsize of the sampling tends to $0$, which, in particular, is satisfied by the set of all evenly spaced time point sequences.
\end{rem}

\subsection{Extension by Approximation} \label{approximation-theorems}

In this section, we will establish an approximation theorem for the channel (\ref{feedback-memory-T}), which naturally connect such a channel with their discrete-time versions obtained by approximation.

An application of the Euler-Maruyama approximation~\cite{kl92} with respect to $\Delta_n$ to (\ref{feedback-memory-T}) will yield a discrete-time sequence $\{Y^{(n)}(t^{(n)}_i): i=0, 1, \dots, n\}$ and a continuous-time process $\{Y^{(n)}(t): t \in [0, T]\}$, a linear interpolation of $\{Y(t^{(n)}_i)\}$, as follows: Initializing with $Y^{(n)}(0)=0$, we recursively compute, for each $i=0, 1, \dots, n-1$,
\begin{equation} \label{Euler-Maruyama-Sequence}
Y^{(n)}(t^{(n)}_{i+1})=Y^{(n)}(t^{(n)}_i)+ \int_{t^{(n)}_i}^{t^{(n)}_{i+1}} g(s, M, Y_0^{(n), t^{(n)}_i}) ds + B(t^{(n)}_{i+1})-B(t^{(n)}_i),
\end{equation}
\begin{equation} \label{linear-interpolation}
Y^{(n)}(t)=Y^{(n)}(t^{(n)}_i)+\frac{t-t^{(n)}_i}{t^{(n)}_{i+1}-t^{(n)}_i} (Y^{(n)}(t^{(n)}_{i+1})-Y^{(n)}(t^{(n)}_i)), \quad t^{(n)}_i \leq t \leq t^{(n)}_{i+1}.
\end{equation}

We will first show the following lemma, which is parallel to Lemma~\ref{improved-liptser-1}.
\begin{lem} \label{improved-liptser-2}
Assume Conditions (d)-(e). Then, there exists $\varepsilon > 0$ and a constant $C > 0$ such that for all $n$,
\begin{equation} \label{exponential-finiteness-2}
\EX [e^{\varepsilon \|Y_0^{(n), T}\|^2}] < C.
\end{equation}
\end{lem}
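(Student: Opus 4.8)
The plan is to mimic the proof of Lemma~\ref{improved-liptser-1}, but now with a discrete-time recursion in place of the continuous SDE, being careful that all constants can be chosen uniformly in $n$. First I would obtain a pathwise bound on $\|Y_0^{(n),T}\|$ in terms of $\|B_0^T\|$ alone, with an $n$-independent constant. Starting from the Euler--Maruyama recursion (\ref{Euler-Maruyama-Sequence}) and the linear interpolation (\ref{linear-interpolation}), note that for $t^{(n)}_i \le t \le t^{(n)}_{i+1}$ the interpolated value lies between $Y^{(n)}(t^{(n)}_i)$ and $Y^{(n)}(t^{(n)}_{i+1})$, so $\|Y_0^{(n),t}\| \le \max_{j \le i+1} |Y^{(n)}(t^{(n)}_j)|$. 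Telescoping the recursion gives
\[
Y^{(n)}(t^{(n)}_k) = \sum_{i=0}^{k-1} \int_{t^{(n)}_i}^{t^{(n)}_{i+1}} g(s, M, Y_0^{(n), t^{(n)}_i})\, ds + B(t^{(n)}_k),
\]
and applying the uniform linear growth Condition (e) to each integrand, together with the observation above that $\|Y_0^{(n), t^{(n)}_i}\| \le \max_{j \le i} |Y^{(n)}(t^{(n)}_j)|$, yields
\[
\max_{j \le k} |Y^{(n)}(t^{(n)}_j)| \le L T + \|B_0^T\| + L \sum_{i=0}^{k-1} (t^{(n)}_{i+1} - t^{(n)}_i)\, \max_{j \le i} |Y^{(n)}(t^{(n)}_j)|.
\]
This is a discrete Gronwall inequality whose ``weights'' $(t^{(n)}_{i+1}-t^{(n)}_i)$ sum to $T$ regardless of $n$, so the discrete Gronwall lemma gives $\max_{j \le n} |Y^{(n)}(t^{(n)}_j)| \le (LT + \|B_0^T\|)\, e^{LT}$, hence $\|Y_0^{(n),T}\| \le L_2 + L_2 \|B_0^T\|$ for a constant $L_2$ independent of $n$.

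With this uniform pathwise bound in hand, the rest is identical to the end of the proof of Lemma~\ref{improved-liptser-1}: for small $\varepsilon > 0$,
\[
\EX[e^{\varepsilon \|Y_0^{(n),T}\|^2}] \le \EX[e^{\varepsilon (L_2 + L_2 \|B_0^T\|)^2}] \le e^{2\varepsilon L_2^2}\, \EX[e^{2\varepsilon L_2^2 \|B_0^T\|^2}] \le 4 e^{2\varepsilon L_2^2}\, \EX[e^{2\varepsilon L_2^2 B(T)^2}],
\]
using $(a+b)^2 \le 2a^2 + 2b^2$ and Doob's submartingale inequality applied to the submartingale $e^{2\varepsilon L_2^2 B(t)^2}$ (or to $|B(t)|$). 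Since $B(T) \sim \mathcal{N}(0,T)$, the last expectation is finite whenever $2\varepsilon L_2^2 < 1/(2T)$, so choosing $\varepsilon$ small enough and setting $C \triangleq 4 e^{2\varepsilon L_2^2}\, \EX[e^{2\varepsilon L_2^2 B(T)^2}] + 1$ gives the claim, with $\varepsilon$ and $C$ not depending on $n$.

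The main obstacle — really the only nontrivial point — is ensuring the uniformity in $n$ of every constant, which hinges on the fact that the sums arising from the telescoped recursion are Riemann sums of a fixed integral over $[0,T]$ and thus stay bounded by $T$ uniformly; once that is recognized, the discrete Gronwall inequality delivers an $n$-free exponential constant $e^{LT}$, and the Brownian estimate at the end is completely standard. One small care point is that the growth condition must be invoked with the interpolated path $Y_0^{(n),t^{(n)}_i}$, but since between grid points the interpolation is a convex combination of grid values, its sup-norm is controlled by the grid values, so no extra loss occurs.
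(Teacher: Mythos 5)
Your proof is correct and follows essentially the same route as the paper, which merely states that ``a discrete-time version of the proof of Lemma~\ref{improved-liptser-1}'' gives a uniform bound on $\EX[e^{\varepsilon \sup_i (Y^{(n)}(t^{(n)}_i))^2}]$ and then invokes (\ref{linear-interpolation}); you have simply filled in that discrete Gronwall step and the interpolation bound explicitly, with the uniformity in $n$ correctly traced to the fact that the weights $t^{(n)}_{i+1}-t^{(n)}_i$ sum to $T$.
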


\begin{proof}
A discrete-time version of the proof of Lemma~\ref{improved-liptser-1} implies that there exists $\varepsilon > 0$ and a constant $C > 0$ such that for all $n$
$$
\EX [e^{\varepsilon \sup_{i \in \{0, 1, \dots, n\}} (Y^{(n)}(t^{(n)}_i))^2}] < C,
$$
which, together with (\ref{linear-interpolation}), immediately implies (\ref{exponential-finiteness-2}).
\end{proof}

We also need the following lemma, which is parallel to Theorem $10.2.2$ in~\cite{kl92}.
\begin{lem}  \label{Y-Y}
Assume Conditions (d)-(e). Then, there exists a constant $C > 0$ such that for all $n$,
$$
\EX[\|Y_0^{(n), T}-Y_0^T\|^2] \leq C \delta_{\Delta_n}.
$$
\end{lem}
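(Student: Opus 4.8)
The plan is to follow the classical strong-convergence argument for the Euler--Maruyama scheme (Theorem 10.2.2 in~\cite{kl92}), adapted to the feedback setting where the drift $g$ depends on the whole past path $Y_0^s$ rather than on the current state alone. Fix a partition $\Delta_n$, write $\delta \triangleq \delta_{\Delta_n}$, and for $t \in [t^{(n)}_i, t^{(n)}_{i+1}]$ set $\underline{t} \triangleq t^{(n)}_i$, the largest grid point not exceeding $t$. From (\ref{feedback-memory-T}) and (\ref{Euler-Maruyama-Sequence})--(\ref{linear-interpolation}) one has, for the piecewise-linear interpolation,
$$
Y(t) - Y^{(n)}(t) = \int_0^t \left( g(s, M, Y_0^s) - g(s, M, Y_0^{(n), \underline{s}}) \right) ds + R_n(t),
$$
where $R_n(t)$ collects the interpolation error: on $[t^{(n)}_i,t^{(n)}_{i+1}]$ the process $Y^{(n)}$ differs from the right-hand side of (\ref{Euler-Maruyama-Sequence}) evaluated at $t$ by the linear interpolation of the increment $\int_{t^{(n)}_i}^{t}g(\cdot)ds + B(t)-B(t^{(n)}_i)$, an error that is uniformly $O(\sqrt{\delta})$ in $L^2$ because both the drift integral over an interval of length $\le\delta$ (controlled by Condition (e) and Lemma~\ref{improved-liptser-2}) and the Brownian increment are. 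First I would record $\sup_{0\le s\le t}\EX[\|R_n\|_{0}^{s}\|^2] \le C\delta$ using Doob's inequality for the Brownian part and the linear-growth/exponential-integrability bounds of Lemmas~\ref{improved-liptser-1} and~\ref{improved-liptser-2} for the drift part.

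Next I would split the drift difference using Condition (d) (uniform Lipschitz):
$$
\|g(s,M,Y_0^s) - g(s,M,Y_0^{(n),\underline{s}})\| \le L\,\|Y_0^s - Y_0^{(n),\underline{s}}\| \le L\,\|Y_0^s - Y_0^{(n),s}\| + L\,\|Y_0^{(n),s} - Y_0^{(n),\underline{s}}\|,
$$
where the second term is the oscillation of the Euler path over one step, again $O(\sqrt{\delta})$ in $L^2$ by the same estimates as for $R_n$. Writing $\varphi_n(t) \triangleq \EX[\sup_{0\le s\le t}|Y(s)-Y^{(n)}(s)|^2]$, applying the Cauchy--Schwarz inequality to the time integral and taking suprema and expectations yields an inequality of the form
$$
\varphi_n(t) \le C_1 \delta + C_2 \int_0^t \varphi_n(s)\, ds,
$$
valid for all $t \in [0,T]$ with constants depending only on $L$, $T$, and the exponential-moment bounds. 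Gronwall's inequality then gives $\varphi_n(T) \le C_1 \delta\, e^{C_2 T}$, which is the claim with $C = C_1 e^{C_2 T}$.

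The main obstacle, and the place where the feedback structure genuinely complicates the textbook proof, is controlling the one-step path oscillation term $\EX[\sup_n \sup_{t} \|Y_0^{(n),t} - Y_0^{(n),\underline{t}}\|^2]$ uniformly in $n$: unlike in the state-dependent case, the drift is evaluated at the frozen past path $Y_0^{(n),\underline{s}}$, so one must argue that on each subinterval the Euler path does not move much, which requires an a priori bound on $\EX[\sup_i |Y^{(n)}(t^{(n)}_i)|^2]$ that is uniform in $n$ — precisely what Lemma~\ref{improved-liptser-2} provides (together with the discrete Gronwall estimate in its proof). A secondary technical point is that the partitions need not be nested or evenly spaced, so all constants must be traced to depend on $\delta_{\Delta_n}$ and $T$ only, not on the fine structure of $\Delta_n$; this is automatic in the estimates above since every bound is phrased through $\delta = \delta_{\Delta_n}$. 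Once these uniform one-step bounds are in hand, the Gronwall closure is routine.
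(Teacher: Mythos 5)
Your proposal is correct and follows essentially the same route as the paper: subtract the exact dynamics from the Euler recursion so the Brownian increments cancel at grid points, use the uniform Lipschitz condition (d) to split the drift discrepancy into the accumulated error $\|Y_0^s-Y_0^{(n),s}\|$ plus the one-step oscillation $\|Y_0^{(n),s}-Y_0^{(n),\underline{s}}\|$, bound the latter in $L^2$ by $O(\delta_{\Delta_n})$ via Condition (e), the uniform moment bound of Lemma~\ref{improved-liptser-2}, and the $O(\delta_{\Delta_n})$ second moment of Brownian increments, and close with Gronwall applied to $\EX[\|Y_0^t-Y_0^{(n),t}\|^2]$. You also correctly identify the one point where the path-dependent drift departs from the textbook state-dependent case, namely that the one-step oscillation bound must be uniform in $n$, which is exactly the role (\ref{YY}) plays in the paper's proof.
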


\begin{proof}
Note that for any $n$, we have
$$
Y(t^{(n)}_{i+1})=Y(t^{(n)}_i)+\int_{t^{(n)}_i}^{t^{(n)}_{i+1}} g(s, M, Y_0^s) ds + B(t^{(n)}_{i+1})-B(t^{(n)}_i),
$$
and
$$
Y^{(n)}(t^{(n)}_{i+1})=Y^{(n)}(t^{(n)}_{i})+\int_{t^{(n)}_{i}}^{t^{(n)}_{i+1}} g(s, M, Y_0^{(n),t^{(n)}_{i}}) ds+ B(t^{(n)}_{i+1})-B(t^{(n)}_{i}).
$$
It then follows that
\begin{equation} \label{Y-diff}
Y(t^{(n)}_{i+1})-Y^{(n)}(t^{(n)}_{i+1})=Y(t^{(n)}_i)-Y^{(n)}(t^{(n)}_i)+\int_{t^{(n)}_i}^{t^{(n)}_{i+1}} (g(s, M, Y_0^s)-g(s, M, Y_0^{(n), t^{(n)}_i})) ds.
\end{equation}
Now, for any $t$, choose $n_0$ such that $t_{n_0}^{(n)} \leq t < t_{n_0+1}^{(n)}$. Now, a recursive application of (\ref{Y-diff}), coupled with Conditions (d) and (e), yields that for some $L > 0$,
{\small \begin{align*}
\hspace{-1.5cm} Y(t)-Y^{(n)}(t)&=\sum_{i=0}^{n_0} \int_{t^{(n)}_{i}}^{t^{(n)}_{i+1}} (g(s, M, Y_0^s)-g(t^{(n)}_{i}, M, Y_0^{(n),t^{(n)}_{i}})) ds+\int_{t_{n, n_0+1}}^{t} (g(s, M, Y_0^s)-g(t^{(n)}_{i}, M, Y_0^{(n),t_{n, n_0+1}})) ds\\
&\leq \sum_{i=0}^{n_0} \int_{t^{(n)}_{i}}^{t^{(n)}_{i+1}} L |s-t^{(n)}_i| + L \|Y_0^s-Y_0^{(n),s}\|+L \|Y_0^{(n),s}-Y_0^{(n),t^{(n)}_{i}}\| ds\\
&+\int_{t_{n_0+1}^{(n)}}^{t} L |s-t_{n_0+1}^{(n)}| + L \|Y_0^s-Y_0^{(n),s}\|+L \|Y_0^{(n),s}-Y_0^{(n),t_{n_0+1}^{(n)}}\| ds.
\end{align*}}
Noticing that for any $s$ with $t^{(n)}_i \leq s < t^{(n)}_{i+1}$, we have
$$
\hspace{-1cm} \|Y_0^{(n),s}-Y_0^{(n),t^{(n)}_{i}}\|^2 \leq |Y^{(n)}(t^{(n)}_{i+1})-Y^{(n)}(t^{(n)}_{i})|^2 \leq 2 \left| \int_{t^{(n)}_{i}}^{t^{(n)}_{i+1}} g(s, M, Y_0^{(n),t^{(n)}_{i}}) ds \right|^2+ 2 |B(t^{(n)}_{i+1})-B(t^{(n)}_{i})|^2,
$$
which, together with Condition (e) and the fact that for all $n$ and $i$,
\begin{equation} \label{BB}
\EX[|B(t^{(n)}_{i+1})-B(t^{(n)}_{i})|^2] = O(\delta_{\Delta_n}),
\end{equation}
implies that
\begin{equation} \label{YY}
\EX[\|Y_0^{(n),s}-Y_0^{(n),t^{(n)}_{i}}\|^2] = O(\delta_{\Delta_n}).
\end{equation}
Noting that the constants in the two terms $O(\delta_{\Delta_n})$ in (\ref{BB}) and (\ref{YY}) can be chosen uniform over all $n$, a usual argument with the Gronwall inequality applied to $\EX[\|Y_0^t-Y_0^{(n), t}\|^2]$ completes the proof of the lemma.
\end{proof}

We are now ready to state and prove the following theorem:
\begin{thm} \label{approximation-theorem}
Assume Conditions (d)-(e). Then, we have
$$
\lim_{n \to \infty} I(M; Y^{(n)}(\Delta_n))=I(M; Y_0^T).
$$
\end{thm}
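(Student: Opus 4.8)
The plan is to compute $I(M;Y^{(n)}(\Delta_n))$ as the expectation of an explicit finite‑dimensional information density, let $n\to\infty$ inside that expectation, and identify the limit with the Girsanov expression for $I(M;Y_0^T)$ underlying (\ref{definition-mutual-information}). \emph{Reduction to increments.} Write $D^{(n)}_{i+1}:=Y^{(n)}(t^{(n)}_{i+1})-Y^{(n)}(t^{(n)}_{i})=a^{(n)}_i(M)+\bigl(B(t^{(n)}_{i+1})-B(t^{(n)}_{i})\bigr)$ with $a^{(n)}_i(M):=\int_{t^{(n)}_{i}}^{t^{(n)}_{i+1}}g(s,M,Y_0^{(n),t^{(n)}_i})\,ds$ and $\delta^{(n)}_{i+1}:=t^{(n)}_{i+1}-t^{(n)}_{i}$. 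Since $(D^{(n)}_1,\dots,D^{(n)}_n)$ and $Y^{(n)}(\Delta_n)$ determine each other deterministically, (\ref{I=I}) gives $I(M;Y^{(n)}(\Delta_n))=I(M;D^{(n)}_1,\dots,D^{(n)}_n)$. Conditioned on $M=m$ and $D^{(n)}_1,\dots,D^{(n)}_i$, the number $a^{(n)}_i$ is deterministic and $D^{(n)}_{i+1}$ is Gaussian with that mean and variance $\delta^{(n)}_{i+1}$, so the conditional density of $(D^{(n)}_1,\dots,D^{(n)}_n)$ given $M=m$ is a product of Gaussians; cancelling the $m$‑independent factors (the Gaussian normalizations and the terms quadratic in the $D^{(n)}_{i+1}$'s) between numerator and denominator, the information density reduces to
\[
\iota^{(n)}=\xi^{(n)}_M-\log\sum_{m\in\mathcal M}p_M(m)\,e^{\xi^{(n)}_m},\qquad
\xi^{(n)}_m:=\sum_i\frac{a^{(n)}_i(m)\,D^{(n)}_{i+1}}{\delta^{(n)}_{i+1}}-\frac12\sum_i\frac{a^{(n)}_i(m)^2}{\delta^{(n)}_{i+1}},
\]
where $p_M$ is the probability mass function of $M$ and $a^{(n)}_i(m)$ is $a^{(n)}_i$ with $m$ in place of $M$ in the first argument of $g$.

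\emph{Passing to the limit in $\xi^{(n)}_m$.} I would show that for each fixed $m$, $\xi^{(n)}_m\to\int_0^T g(s,m,Y_0^s)\,dY(s)-\tfrac12\int_0^T g(s,m,Y_0^s)^2\,ds$ in probability. Condition (d) lets one replace the cell average $a^{(n)}_i(m)/\delta^{(n)}_{i+1}$ by $g(t^{(n)}_i,m,Y_0^{(n),t^{(n)}_i})$ up to $O(\delta_{\Delta_n})$, and then by $g(s,m,Y_0^s)$ up to errors controlled by $\delta_{\Delta_n}$, the cell oscillation of $Y^{(n)}$, and $\|Y_0^{(n),s}-Y_0^s\|$. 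After substituting $D^{(n)}_{i+1}=a^{(n)}_i(M)+(B(t^{(n)}_{i+1})-B(t^{(n)}_{i}))$, the term $-\tfrac12\sum_i a^{(n)}_i(m)^2/\delta^{(n)}_{i+1}$ and the drift--drift part of $\sum_i a^{(n)}_i(m)D^{(n)}_{i+1}/\delta^{(n)}_{i+1}$ become ordinary Riemann sums of the continuous integrands $s\mapsto g(s,m,Y_0^s)^2$ and $s\mapsto g(s,m,Y_0^s)g(s,M,Y_0^s)$, while the remaining part $\sum_i(a^{(n)}_i(m)/\delta^{(n)}_{i+1})(B(t^{(n)}_{i+1})-B(t^{(n)}_{i}))$ is a discrete It\^o integral which, by an It\^o‑isometry estimate, converges in $L^2$ to $\int_0^T g(s,m,Y_0^s)\,dB(s)$. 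All error terms are controlled uniformly in $n$ by Lemma~\ref{Y-Y} (so $Y^{(n)}\to Y$ uniformly in $L^2$ and, via (\ref{YY}), the cell oscillations of $Y^{(n)}$ are $O(\delta_{\Delta_n})$ in $L^2$), by the uniform linear growth (e), and by the exponential moments of Lemmas~\ref{improved-liptser-1}--\ref{improved-liptser-2}. Since $\mathcal M$ is finite, $\iota^{(n)}\to\log\bigl(F_M(Y_0^T)/\bar F(Y_0^T)\bigr)$ in probability, where $F_m(y)=\exp\bigl(\int_0^T g(s,m,y_0^s)\,dy(s)-\tfrac12\int_0^T g(s,m,y_0^s)^2\,ds\bigr)$ is the Girsanov density of $\mu_{Y_0^T\mid M=m}$ with respect to Wiener measure and $\bar F=\int F_m\,d\mu_M$; by Girsanov's theorem (legitimate under (d)--(e), as (e) together with Lemma~\ref{improved-liptser-1} makes the relevant exponential local martingale a true martingale), this limit has expectation $I(M;Y_0^T)$ via (\ref{definition-mutual-information}).

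\emph{Exchanging limit and expectation.} It remains to prove uniform integrability of $\{\iota^{(n)}\}$. Retaining only the $m=M$ summand in the denominator gives the pointwise bound $\iota^{(n)}\le-\log p_M(M)$, an integrable majorant with $\EX[-\log p_M(M)]=H(M)<\infty$, so the positive parts are dominated; and the identity $\EX[e^{-\iota^{(n)}}]=1$, valid for any information density built from equivalent measures, gives $\EX[e^{(\iota^{(n)})^-}]\le 2$ uniformly in $n$, so the negative parts are uniformly integrable by de la Vall\'ee-Poussin. Hence $I(M;Y^{(n)}(\Delta_n))=\EX[\iota^{(n)}]\to I(M;Y_0^T)$. (The lower bound alone admits a softer proof avoiding Girsanov: $I(M;Y^{(n)}(\Delta_n))=I(M;Y_0^{(n),T})$ by (\ref{I=I}) since the interpolation (\ref{linear-interpolation}) is a deterministic bijection, $Y_0^{(n),T}\to Y_0^T$ in probability in $C[0,T]$ by Lemma~\ref{Y-Y}, and mutual information is lower semicontinuous under weak convergence of the joint law.)

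\emph{Main obstacle.} The crux is the convergence $\xi^{(n)}_m\to\int_0^T g(s,m,Y_0^s)\,dY(s)-\tfrac12\int_0^T g(s,m,Y_0^s)^2\,ds$ in probability, and inside it the It\^o‑integral term: one must pass to the limit in Riemann sums whose integrands are simultaneously discretized in time and evaluated along the approximating paths $Y^{(n)}$ rather than along $Y$, and bound every resulting error uniformly in $n$ using (d)--(e) together with Lemmas~\ref{improved-liptser-1}--\ref{improved-liptser-2} and~\ref{Y-Y}. Checking that Girsanov's theorem applies under (d)--(e) is a standard but non‑trivial companion point; by contrast the uniform integrability is soft, relying only on the finiteness of $\mathcal M$ and on $\EX[e^{-\iota^{(n)}}]=1$.
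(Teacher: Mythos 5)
Your proposal is correct, and its analytic core coincides with \textbf{Step 1} of the paper's proof: you express $I(M;Y^{(n)}(\Delta_n))$ as the expectation of the finite-dimensional information density coming from the product-Gaussian conditional law of the increments (your $\xi^{(n)}_m$ is exactly $-A_3(m,Y_0^{(n),T})$ in (\ref{I-M-Yn})), and your convergence of the discrete Girsanov exponents to $-A_1(m,Y_0^T)$ is the same estimate as (\ref{first-half})--(\ref{M-conv}), resting on Conditions (d)--(e) and Lemmas~\ref{improved-liptser-1}, \ref{improved-liptser-2} and~\ref{Y-Y}. Where you genuinely diverge is the exchange of limit and expectation. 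The paper first assumes the uniform bound (\ref{approximating-case-1}) so that it can dominate the logarithms via exponential-moment estimates as in (\ref{m-conv-2})--(\ref{m-conv-3}) and invoke the general dominated convergence theorem, and then needs an entire second step (truncation of $g$ by the stopping times $\tau_k$, the uniform-in-$n$ comparison (\ref{absolute-value-bounded}), and a diagonal subsequence argument) to remove that bound. You instead use two universal properties of the information density of a finite-alphabet $M$ against an observation with strictly positive conditional densities: $\iota^{(n)}\le -\log p_M(M)\le \max_m(-\log p_M(m))$, and $\EX[e^{-\iota^{(n)}}]=1$, hence $\EX[e^{(\iota^{(n)})^-}]\le 2$, so de la Vall\'ee-Poussin gives uniform integrability and Vitali's theorem converts convergence in probability into convergence of expectations. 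This eliminates the truncation step and the exponential-moment bookkeeping entirely, at the price of needing only what you already have: the convergence in probability of $\xi^{(n)}_m$ must be proved without (\ref{approximating-case-1}), which works because the $L^1$/$L^2$ error estimates behind (\ref{M-conv}) use only (d)--(e) and Lemmas~\ref{improved-liptser-1}--\ref{Y-Y}; and the identification $\EX[\log(F_M/\bar F)]=I(M;Y_0^T)$ requires the Girsanov densities, which are available here since (d)--(e) imply (a)--(c) by Lemma~\ref{improved-liptser-1} and hence the equivalences (\ref{two-tilde}). I see no gap; your route is shorter and arguably cleaner than the paper's.
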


\begin{proof}
The proof is rather technical and lengthy, and thereby postponed to Section~\ref{proof-approximation-theorem}.
\end{proof}

\begin{rem} \label{sampling-approximation-theorems}
When there is no feedback, Theorem~\ref{approximation-theorem} boils down to Theorem~\ref{sampling-theorem}: obviously it holds that for any feasible $i$,
$$
Y^{(n)}(t^{(n)}_i)=Y(t^{(n)}_i),
$$
which means that Theorem~\ref{approximation-theorem} actually states
$$
\lim_{n \to \infty} I(M; Y(\Delta_n))=I(M; Y_0^T),
$$
which is precisely the conclusion of Theorem~\ref{sampling-theorem}.
\end{rem}

\section{Applications of Our Results} \label{multi-user-Gaussian-channels}

In this section, we discuss possible applications of our extensions. Evidently, establishing causality-preserving connections between continuous-time and discrete-time Gaussian feedback channels, our results may help channel the ideas and techniques in the discrete-time regime to the continuous-time one. Below, we use an example to illustrate this point.

Consider the continuous-time white Gaussian feedback channel as in (\ref{feedback-memory}) and assume that Conditions (d) and (e) are satisfied, and moreover the following average power constraint is satisfied: there exists $P > 0$ such that for any $T$,
\begin{equation}  \label{power}
\frac{1}{T} \int_0^T \EX[g^2(s, M, Y_0^s)] ds \leq P.
\end{equation}
It has been established in~\cite{ka71} that
$$
I(M; Y_0^T) = \frac{1}{2} \int_0^{T} \EX[g^2(s, M, Y_0^s)]-\EX[\EX^2[g(s, M, Y_0^s)|Y_0^s])] ds,
$$
which, together with (\ref{power}), immediately implies that
\begin{equation} \label{half-PT}
I(M; Y_0^T) \leq \frac{PT}{2}.
\end{equation}
Below, we will use Theorem~\ref{approximation-theorem} and some basic facts for discrete-time Gaussian channels to derive (\ref{half-PT}), which, in combination with the proven fact that one can choose $g$, $M$ and sufficiently large $T$ so that $I(M; Y_0^T)/T$ is arbitrarily close to $P/2$ (see Theorem $6.4.1$ in~\cite{ih93}), implies that the capacity $\mathcal{C}$ of the channel (\ref{feedback-memory}) is indeed $P/2$.

First of all, for fixed $T > 0$, consider the evenly spaced $\Delta_n$ with stepsize $\delta_n = T/n$. Applying the Euler-Maruyama approximation (\ref{Euler-Maruyama-Sequence}) to the channel (\ref{feedback-memory-T}) over the time window $[0, T]$, we obtain
\begin{equation} \label{revisited}
Y^{(n)}(t^{(n)}_{i+1})=Y^{(n)}(t^{(n)}_i)+ \int_{t^{(n)}_i}^{t^{(n)}_{i+1}} g(s, M, Y_0^{(n), t^{(n)}_i}) ds + B(t^{(n)}_{i+1})-B(t^{(n)}_i).
\end{equation}
By Theorem~\ref{approximation-theorem}, we have
\begin{equation}  \label{another-heuristic}
I(M; Y_0^T) = \lim_{n \to \infty} I(M; Y^{(n)}(\Delta_n)).
\end{equation}
Note that, it can be easily verified that
\begin{align*}
h(\{Y^{(n)}(t^{(n)}_{i})-Y^{(n)}(t^{(n)}_{i-1})\}_{i=1}^n|M) & \stackrel{(a)}{=} \sum_{i=1}^{n} h(Y^{(n)}(t^{(n)}_i)-Y^{(n)}(t^{(n)}_{i-1})|\{Y^{(n)}(t^{(n)}_{j})-Y^{(n)}(t^{(n)}_{j-1})\}_{j=1}^{i-1}, M)\\
& \stackrel{(b)}{=}h(\{B(t^{(n)}_{i})-B(t^{(n)}_{i-1})\}_{i=1}^n|M)\\
& \stackrel{(c)}{=}h(\{B(t^{(n)}_{i})-B(t^{(n)}_{i-1})\}_{i=1}^n)\\
& \stackrel{(d)}{=}\sum_{i=1}^{n} h(B(t^{(n)}_i)-B(t^{(n)}_{i-1})),
\end{align*}
where we have used the conditional chain rule for differential entropy for (a); and (\ref{revisited}) for (b); and the independence between $\{B(t)\}$ and $M$ for (c); and (\ref{chain-rule}) and the fact that a Brownian motion has independent increments for (d). Then, using (\ref{I=I}), (\ref{I-h}), (\ref{chain-rule}) and (\ref{reduces}), we find that
\begin{align*}
I(M; Y^{(n)}(\Delta_n)) &= I(M; \{Y^{(n)}(t^{(n)}_{i})-Y^{(n)}(t^{(n)}_{i-1})\}_{i=1}^n\})\\
&= h(\{Y^{(n)}(t^{(n)}_{i})-Y^{(n)}(t^{(n)}_{i-1})\}_{i=1}^n)-h(\{Y^{(n)}(t^{(n)}_{i})-Y^{(n)}(t^{(n)}_{i-1})\}_{i=1}^n|M)\\
&=\sum_{i=1}^{n} h(Y^{(n)}(t^{(n)}_i)-Y^{(n)}(t^{(n)}_{i-1})|\{Y^{(n)}(t^{(n)}_{j})-Y^{(n)}(t^{(n)}_{j-1})\}_{j=1}^{i-1})-\sum_{i=1}^{n} h(B(t^{(n)}_i)-B(t^{(n)}_{i-1}))\\
& \leq \sum_{i=1}^{n} h(Y^{(n)}(t^{(n)}_i)-Y^{(n)}(t^{(n)}_{i-1}))-\sum_{i=1}^{n} h(B(t^{(n)}_i)-B(t^{(n)}_{i-1})).
\end{align*}
Next, using the Cauchy-Schwarz inequality, we reach
\begin{align*}
Var(Y^{(n)}(t^{(n)}_i)-Y^{(n)}(t^{(n)}_{i-1})) &= \EX[(Y^{(n)}(t^{(n)}_i)-Y^{(n)}(t^{(n)}_{i-1}))^2]\\
& \leq \delta_n \EX\left[\int_{t^{(n)}_{i-1}}^{t^{(n)}_i} g^2(s, M, Y_0^{(n), t^{(n)}_{i-1}}) ds \right]+\EX[(B(t^{(n)}_i)-B(t^{(n)}_{i-1}))^2]\\
&=\delta_n \int_{t^{(n)}_{i-1}}^{t^{(n)}_i} \EX\left[g^2(s, M, Y_0^{(n), t^{(n)}_{i-1}}) \right] ds +\delta_n,
\end{align*}
which, together with (\ref{maximizes}), further implies that
\begin{align}
\label{ineq-1} I(M; Y^{(n)}(\Delta_n)) & \leq \frac{1}{2} \sum_{i=0}^{n} \log \left(1+\int_{t^{(n)}_{i-1}}^{t^{(n)}_i} \EX\left[g^2(s, M, Y_0^{(n), t^{(n)}_{i-1}}) \right] ds \right) \\
\label{ineq-2} & \leq \frac{1}{2} \sum_{i=0}^{n} \int_{t^{(n)}_{i-1}}^{t^{(n)}_i} \EX\left[g^2(s, M, Y_0^{(n), t^{(n)}_{i-1}})\right] ds.
\end{align}
It then follows from (\ref{another-heuristic}), Condition (d), Lemma~\ref{Y-Y} and (\ref{power}) that
\begin{equation} \label{ineq-3}
I(M; Y_0^T) \leq \frac{1}{2} \int_0^T \EX[g^2(s, M, Y_0^s)] ds \leq \frac{P T}{2},
\end{equation}
as desired.

\section{Proof of Theorem~\ref{sampling-theorem}}  \label{proof-sampling-theorem}

First of all, recall from Section~\ref{notations} that for a stochastic process $\{X(s)\}$ and any $t \in \mathbb{R}_+$, we use $\mu_{X_0^t}$ to denote the distribution on $C[0, t]$ induced by $X_0^t$. Throughout the proof, we only have to deal with the case $t=T$, and so we will simply write $\mu_{X_0^T}$ as $\mu_X$ for notational simplicity. And for cosmetic reasons, we will write the summation $\sum_m (\cdot) p_M(m)$ as the integral $\int (\cdot) d\mu_M(m)$.

\subsection{Proof of 1)}

Note that an application of Theorem $7.14$ of~\cite{li01} with Conditions (b) and (c) yields that
\begin{equation} \label{crazy}
P\left(\int_0^T \EX^2[g(t, M, Y_0^t)|Y_0^t] dt < \infty \right)=1.
\end{equation}
Then one verifies that the assumptions of Lemma $7.7$ of~\cite{li01} are all satisfied (this lemma is stated under very general assumptions, which are exactly Conditions (b), (c) and (\ref{crazy}) when restricted to our settings), which implies that for any $m$,
\begin{equation}  \label{two-tilde}
\mu_Y \sim \mu_{Y|M=m} \sim \mu_B,
\end{equation}
and moreover, with probability $1$,
{\small \begin{equation}  \label{RN-1}
\frac{d\mu_{Y|M}}{d\mu_B}(Y_0^T|M)=\frac{1}{\EX[e^{A_1(M, Y_0^T)}|Y_0^T, M]}, \quad \frac{d\mu_{Y}}{d\mu_B}(Y_0^T)=\frac{1}{\EX[e^{A_1(M, Y_0^T)}|Y_0^T]},
\end{equation}}
where
$$
A_1(m, Y_0^T) \triangleq -\int_0^T g(s, m, Y_0^s) dY(s)+\frac{1}{2} \int_0^T g(s, m, Y_0^s)^2 ds,
$$
and
$$
A_1(M, Y_0^T) \triangleq -\int_0^T g(s, M, Y_0^s) dY(s)+\frac{1}{2} \int_0^T g(s, M, Y_0^s)^2 ds.
$$
Here we note that $\EX[e^{A_1(M, Y_0^T)}|Y_0^T, M]$ is in fact equal to $e^{A_1(M, Y_0^T)}$, but we keep it the way it is as above for an easy comparison.

Note that it follows from $\EX[d\mu_B/d\mu_Y(Y_0^T)]=1$ that $\EX[e^{A_1(M, Y_0^T)}]=1$, which is equivalent to
\begin{equation}  \label{weak-Novikov}
\EX[e^{-\int_0^T g(s, M, Y_0^s) dB(s)-\frac{1}{2} \int_0^T g(s, M, Y_0^s)^2 ds}]=1.
\end{equation}
Then, a parallel argument as in the proof of Theorem $7.1$ of~\cite{li01} (which requires the condition (\ref{weak-Novikov})) further implies that, for any $n$,
{\small \begin{equation}  \label{RN-2}
\hspace{-1.0cm} \frac{d\mu_{Y(\Delta_n)|M}}{d\mu_{B(\Delta_n)}}(Y(\Delta_n)|M) = \frac{1}{\EX[e^{A_1(M, Y_0^T)}|Y(\Delta_n), M]}, \quad \frac{d\mu_{Y(\Delta_n)}}{d\mu_{B(\Delta_n)}}(Y(\Delta_n))=\frac{1}{\EX[e^{A_1(M, Y_0^T)}|Y(\Delta_n)]}, ~~\mbox{a.s.}.
\end{equation}}
Notice that it can be easily checked that $e^{A_1(M, Y_0^T)}$ integrable, which, together with the assumption that $\Delta_n \subset \Delta_{n+1}$ for all $n$, further implies that
$$
\{\EX[e^{A_1(M, Y_0^T)}|Y(\Delta_n), M]\}, \quad \{\EX[e^{A_1(M, Y_0^T)}|Y(\Delta_n)]\}
$$
are both martingales, and therefore, by Doob's martingale convergence theorem~\cite{Durrett},
$$
\frac{d\mu_{Y(\Delta_n)|M}}{d\mu_{B(\Delta_n)}}(Y(\Delta_n)|M) \to \frac{d\mu_{Y|M}}{d\mu_B}(Y_0^T|M), \quad \frac{d\mu_{Y(\Delta_n)}}{d\mu_{B(\Delta_n)}}(Y(\Delta_n)) \to \frac{d\mu_{Y}}{d\mu_B}(Y_0^T), \mbox{ a.s.}.
$$

Now, by Jensen's inequality, we have
\begin{equation} \label{xianming-inequality-1}
\EX\left[ \left. A_1(M, Y_0^T) \right| Y(\Delta_n), M \right] \leq \log\EX[e^{A_1(M, Y_0^T)}|Y(\Delta_n), M],
\end{equation}
and, by the fact that $\log x \leq x$ for any $x > 0$, we have
\begin{equation} \label{xianming-inequality-2}
\log\EX[e^{A_1(M, Y_0^T)}|Y(\Delta_n), M] \leq \EX[e^{A_1(M, Y_0^T)}|Y(\Delta_n), M].
\end{equation}
It then follows from (\ref{xianming-inequality-1}) and (\ref{xianming-inequality-2}) that
\begin{align*}
\left|\log\EX[e^{A_1(M, Y_0^T)}|Y(\Delta_n), M]\right| \leq \left|\EX\left[A_1(M, Y_0^T)|Y(\Delta_n), M \right]\right| +\EX[e^{A_1(M, Y_0^T)}|Y(\Delta_n), M].
\end{align*}
Applying the general Lebesgue dominated convergence theorem (see, e.g., Theorem $19$ on Page $89$ of~\cite{ro10}), we then have
\begin{equation} \label{l-1}
\lim_{n \to \infty} \EX\left[\log \frac{d\mu_{Y|M}}{d\mu_B}(Y(\Delta_n)|M)\right] = \EX [\log  \EX[e^{A_1(M, Y_0^T)}|Y^{T}_{0}, M]] = \EX\left[\log \frac{d\mu_{Y|M}}{d\mu_B}(Y_{0}^{T}|M)\right].
\end{equation}
A completely parallel argument yields that
\begin{equation} \label{l-2}
\lim_{n \to \infty} \EX\left[\log \frac{d\mu_{Y(\Delta_n)}}{d\mu_{B(\Delta_n)}}(Y(\Delta_n)) \right] = \EX[\log  \EX[\exp(A_1(M, Y_0^T))|Y^{T}_{0}]] = \EX\left[\log \frac{d\mu_{Y}}{d\mu_B}(Y_{0}^{T})\right].
\end{equation}
Next, by the definition of mutual information, we have
\begin{align}
I(M; Y(\Delta_n)) &= \EX\left[\log f(\mu_{Y(\Delta_n)|M})(Y(\Delta_n)|M)\right]-\EX\left[\log f(\mu_{Y(\Delta_n)})(Y(\Delta_n))\right] \nonumber\\
& = \EX\left[\log \frac{d\mu_{Y(\Delta_n)|M}}{d\mu_{B(\Delta_n)}}(Y(\Delta_n)|M)\right]-\EX\left[\log \frac{d\mu_{Y(\Delta_n)}}{d\mu_{B(\Delta_n)}}(Y(\Delta_n))\right], \label{l-3}
\end{align}
and
\begin{align}
I(M; Y_0^T) & = \EX\left[\log \frac{d \mu_{M, Y_0^T}}{d \mu_{M} \times \mu_{Y_0^T}}(M, Y_0^T)\right] \nonumber\\
& =\EX\left[\log \frac{d\mu_{Y|M}}{d\mu_{B}}(Y_0^T|M)\right]-\EX\left[\log \frac{d\mu_{Y}}{d\mu_B}(Y_0^T)\right], \label{l-4}
\end{align}
where the well-definedness of the Radon-Nikodym derivatives are guaranteed by (\ref{two-tilde}).

Finally, with (\ref{l-1}), (\ref{l-2}), (\ref{l-3}) and (\ref{l-4}), we conclude that
$$
\lim_{n \to \infty} I(M; Y(\Delta_n))=\EX\left[\log \frac{d\mu_{Y|M}}{d\mu_B}(Y_0^T|M)\right]-\EX\left[\log \frac{d\mu_{Y}}{d\mu_B}(Y_0^T)\right]=I(M;Y_0^T),
$$
as desired.

\subsection{Proof of 2)}
We proceed in the following steps.

{\bf Step $\bf 1$.} In this step, we establish the theorem assuming that there exists $C > 0$ such that for all $m \in \mathcal{M}$ and all $y_0^T \in C[0, T]$,
\begin{equation} \label{case-1}
\int_0^T g^2(s, m, y_0^s) ds < C.
\end{equation}
By (\ref{l-3}), (\ref{RN-1}) and (\ref{RN-2}), we have
{\small \begin{align*}
I(M; Y(\Delta_n)) &= \EX\left[\log \frac{d\mu_{Y(\Delta_n)|M}}{d\mu_{B(\Delta_n)}}(Y(\Delta_n)|M)\right]-\EX\left[\log \frac{d\mu_{Y(\Delta_n)}}{d\mu_{B(\Delta_n)}}(Y(\Delta_n))\right]\\
&= -\EX[\log \EX[e^{A_1(M, Y_0^T)}|Y(\Delta_n), M]] + \EX[\log \EX[e^{A_1(M, Y_0^T)}|Y(\Delta_n)]]
\end{align*}

{\bf Step $\bf 1.1$.} In this step, we prove that as $n$ tends to infinity,
\begin{equation} \label{Fn-Conv}
\log \EX[e^{A_1(M, Y_0^T)}|Y(\Delta_n), M] \to A_1(M, Y_0^T),
\end{equation}
in probability.

Let $\bar{Y}_{\Delta_n, 0}^T$ denote the piecewise linear version of $Y_0^T$ with respect to $\Delta_n$; more precisely, for any $i=0, 1, \dots, n$, let $\bar{Y}_{\Delta_n}(t^{(n)}_{i})=Y(t^{(n)}_{i})$, and for any $t^{(n)}_{i-1} < s < t^{(n)}_i$ with $s=\lambda t^{(n)}_{i-1}+(1-\lambda) t^{(n)}_i$ for some $0 < \lambda < 1$, let $\bar{Y}_{\Delta_n}(s)=\lambda Y(t^{(n)}_{i-1})+(1-\lambda) Y(t^{(n)}_i)$. Let $\bar{g}_{\Delta_n}(s, M, \bar{Y}_{\Delta_n, 0}^s)$ denote the piecewise ``flat'' version of $g(s, M, \bar{Y}_{\Delta_n, 0}^s)$ with respect to $\Delta_n$; more precisely, for any $t^{(n)}_{i-1} \leq s < t^{(n)}_i$, $\bar{g}_{\Delta_n}(s, M, \bar{Y}_{\Delta_n, 0}^s)=g(t^{(n)}_{i-1}, M, \bar{Y}_{\Delta_n, 0}^{t^{(n)}_{i-1}})$.

Letting
$$
A_2(\Delta_n, m, Y_0^T) \triangleq -\int_0^T \bar{g}_{\Delta_n}(s, m, \bar{Y}_{\Delta_n, 0}^s) dY(s)+\frac{1}{2} \int_0^T \bar{g}^2_{\Delta_n}(s, m, \bar{Y}_{\Delta_n, 0}^s) ds,
$$
and
$$
A_2(\Delta_n, M, Y_0^T) \triangleq -\int_0^T \bar{g}_{\Delta_n}(s, M, \bar{Y}_{\Delta_n, 0}^s) dY(s)+\frac{1}{2} \int_0^T \bar{g}^2_{\Delta_n}(s, M, \bar{Y}_{\Delta_n, 0}^s) ds,
$$
we have
\begin{align*}
\log \EX[e^{A_1(M, Y_0^T)}|Y(\Delta_n), M] & = \log \EX[e^{A_2(\Delta_n, M, Y_0^T) +A_1(M, Y_0^T)-A_2(\Delta_n, M, Y_0^T)}|Y(\Delta_n), M]\\
    & = \log e^{A_2(\Delta_n, M, Y_0^T)} \EX[e^{A_1(M, Y_0^T)-A_2(\Delta_n, M, Y_0^T)}|Y(\Delta_n), M]\\
    & = A_2(\Delta_n, M, Y_0^T) +\log \EX[e^{A_1(M, Y_0^T)-A_2(\Delta_n, M, Y_0^T)}|Y(\Delta_n), M],
\end{align*}
where we have used the fact that
$$
\EX[e^{A_2(\Delta_n, M, Y_0^T)}|Y(\Delta_n), M]=e^{A_2(\Delta_n, M, Y_0^T)},
$$
since $A_2(\Delta_n, M, Y_0^T)$ only depends on $M$ and $Y(\Delta_n)$.

We now prove the following convergence:
\begin{equation} \label{conv-1}
\lim_{n \to \infty} \EX\left[(A_1(M, Y_0^T)-A_2(\Delta_n, M, Y_0^T))^2\right] = 0,
\end{equation}
which will imply that
$$
A_2(\Delta_n, M, Y_0^T) \to A_1(M, Y_0^T)
$$
in probability. To prove (\ref{conv-1}), we note that
$$
A_1(M, Y_0^T)-A_2(\Delta_n, M, Y_0^T) = -\int_0^T (g(s)-\bar{g}_{\Delta_n}(s)) dB(s) - \frac{1}{2} \int_0^T (g(s)-\bar{g}_{\Delta_n}(s))^2 ds,
$$
where we have rewritten $g(s, M, Y_0^s)$ as $g(s)$, and $\bar{g}_{\Delta_n}(s, M, \bar{Y}_{\Delta_n, 0}^s)$ as $\bar{g}_{\Delta_n}(s)$. It then follows that (\ref{conv-1}) boils down to
\begin{equation} \label{conv-1a}
\lim_{n \to \infty} \EX\left[\left(-\int_0^T (g(s)-\bar{g}_{\Delta_n}(s)) dB(s) - \frac{1}{2} \int_0^T (g(s)-\bar{g}_{\Delta_n}(s))^2 ds \right)^2 \right] = 0.
\end{equation}
To establish (\ref{conv-1a}), notice that, by the It\^{o} isometry~\cite{ok95}, we have
$$
\EX\left[\left(\int_0^T (g(s)-\bar{g}_{\Delta_n}(s)) dB(s) \right)^2\right] = \EX\left[\int_0^T (g(s)-\bar{g}_{\Delta_n}(s))^2 ds \right],
$$
which means we only need to prove that
\begin{equation} \label{conv-1b}
\lim_{n \to \infty} \EX\left[\left(\int_0^T (g(s)-\bar{g}_{\Delta_n}(s))^2 ds \right)^2\right] = 0.
\end{equation}
To see this, we note that, by Conditions (d) and (e), there exists $L_1 > 0$ such that for any $s \in [0, T]$ with $t^{(n)}_{i-1} \leq s < t^{(n)}_i$,
\begin{align}
\hspace{-1cm} |g(s, M, \bar{Y}_{\Delta_n, 0}^s)-\bar{g}_{\Delta_n}(s, M, \bar{Y}_{\Delta_n, 0}^s)| & = |g(s, M, \bar{Y}_{\Delta_n, 0}^s)-g(t^{(n)}_{i-1}, M, \bar{Y}_{\Delta_n, 0}^{t^{(n)}_{i-1}})| \nonumber\\
& \leq L_1 (|s-t^{(n)}_{i-1}| +\|\bar{Y}_{\Delta_n, 0}^s-\bar{Y}_{\Delta_n, 0}^{t^{(n)}_{i-1}}\|) \nonumber\\
& \leq L_1 (|s-t^{(n)}_{i-1}| + |Y(t^{(n)}_i)-Y(t^{(n)}_{i-1})|) \nonumber\\
& \leq  L_1 \delta_{\Delta_n}+ L_1 \delta_{\Delta_n}+L_1 \delta_{\Delta_n} \|Y_0^T\|+|B(t^{(n)}_i)-B(t^{(n)}_{i-1})|. \label{diff-2}
\end{align}
Moreover, by Lemma~\ref{improved-liptser-1}, $\|Y_0^T\|^4$ is integrable. And furthermore, one easily verifies that that
\begin{equation} \label{conv-1d}
\EX[(B(t^{(n)}_i)-B(t^{(n)}_{i-1}))^4] = 3 (t^{(n)}_i-t^{(n)}_{i-1})^2 \leq 3 \delta^2_{\Delta_n}.
\end{equation}
It can be readily checked that (\ref{diff-2}) and (\ref{conv-1d}) imply (\ref{conv-1b}), which in turn implies (\ref{conv-1}), as desired.

We now prove the following convergence:
\begin{equation} \label{conv-2}
\lim_{n \to \infty} \EX[|\EX[e^{A_1(M, Y_0^T)-A_2(\Delta_n, M, Y_0^T)}|Y(\Delta_n), M]-1|] = 0,
\end{equation}
which will imply that
$$
\log \EX[e^{A_1(M, Y_0^T)-A_2(\Delta_n, M, Y_0^T)}|Y(\Delta_n), M] \to 0
$$
in probability and furthermore (\ref{Fn-Conv}). To establish (\ref{conv-2}), we first note that
\begin{align*}
&\hspace{-1cm} \EX[|\EX[e^{A_1(M, Y_0^T)-A_2(\Delta_n, M, Y_0^T)}|Y(\Delta_n), M]-1|]\\
&\leq \EX[\EX[|e^{A_1(M, Y_0^T)-A_2(\Delta_n, M, Y_0^T)}-1||Y(\Delta_n), M]]\\
&= \EX[|e^{A_1(M, Y_0^T)-A_2(\Delta_n, M, Y_0^T)}-1|]\\
&\leq \EX\left[\left|A_1(M, Y_0^T)-A_2(\Delta_n, M, Y_0^T)\right| e^{|A_1(M, Y_0^T)-A_2(\Delta_n, M, Y_0^T)|}\right]\\
&\leq \EX\left[\left|A_1(M, Y_0^T)-A_2(\Delta_n, M, Y_0^T)\right|^2\right] \EX\left[e^{2 |A_1(M, Y_0^T)-A_2(\Delta_n, M, Y_0^T)|}\right].
\end{align*}
It then follows from (\ref{conv-1}) that, to prove (\ref{conv-2}), we only need to prove that if $\delta_{\Delta_n}$ is small enough,
\begin{equation} \label{conv-2a}
\EX\left[e^{2 |A_1(M, Y_0^T)-A_2(\Delta_n, M, Y_0^T)|}\right] < \infty.
\end{equation}
Since
\begin{equation} \label{two-terms}
\EX\left[e^{2 |A_1(M, Y_0^T)-A_2(\Delta_n, M, Y_0^T)|}\right] \leq \EX\left[e^{2 (A_1(M, Y_0^T)-A_2(\Delta_n, M, Y_0^T))}\right]+ \EX\left[e^{-2(A_1(M, Y_0^T)-A_2(\Delta_n, M, Y_0^T))}\right],
\end{equation}
we only have to prove that the two terms in the above upper bound are both finite provided that $\delta_{\Delta_n}$ is small enough. Note that for the first term, applying the Cauchy-Schwarz inequality, we have
\begin{align*}
\hspace{-1cm} \EX[e^{2 (A_1(M, Y_0^T)-A_2(\Delta_n, M, Y_0^T))}] & = \EX[e^{ \int_0^T 2 (g(s)-\bar{g}_{\Delta_n}(s)) dB(s) - \int_0^T 4 (g(s)-\bar{g}_{\Delta_n}(s))^2 ds+3 \int_0^T (g(s)-\bar{g}_{\Delta_n}(s))^2 ds}]\\
& \leq \EX[e^{ \int_0^T 4 (g(s)-\bar{g}_{\Delta_n}(s)) dB(s) - \int_0^T 8 (g(s)-\bar{g}_{\Delta_n}(s))^2 ds}] \EX[e^{6 \int_0^T (g(s)-\bar{g}_{\Delta_n}(s))^2 ds}].
\end{align*}
Then, an application of Fatou's lemma yields that
\begin{equation}  \label{Fatou-Lemma}
\EX[e^{ \int_0^T 4 (g(s)-\bar{g}_{\Delta_n}(s)) dB(s) - \int_0^T 8 (g(s)-\bar{g}_{\Delta_n}(s))^2 ds}] \leq 1.
\end{equation}
and by (\ref{diff-2}), we deduce that there exists $L_3 > 0$ such that
$$
\hspace{-1.5cm} \EX[e^{6 \int_0^T (g(s)-\bar{g}_{\Delta_n}(s))^2 ds}] \leq e^{L_3 \delta_{\Delta_n}^2} \EX[e^{L_3 \|B_0^{\delta_{\Delta_n}}\|^2}] \EX[e^{L_3 \delta^2_{\Delta_n} \|Y_0^T\|^2}].
$$
Note that it follows from Doob's submartingale inequality that if $\delta_{\Delta_n}$ is small enough,
$$
\EX[e^{L_3 \|B_0^{\delta_{\Delta_n}}\|^2}] < \infty.
$$
Furthermore, via a usual argument with the Gronwall inequality, we infer that there exists $L_2 > 0$ such that
$$
\|Y_0^T\| \leq L_2 (1+\|B_0^T\|),
$$
which, together with Doob's submartingale inequality, implies that if $\delta_{\Delta_n}$ is small enough,
$$
\EX[e^{L_3 \delta^2_{\Delta_n} \|Y_0^T\|^2}] < \infty.
$$
It then follows that for the first term in (\ref{two-terms})
\begin{equation} \label{SongJianMethod}
\EX[e^{2 (A_1(M, Y_0^T)-A_2(\Delta_n, M, Y_0^T))}] < \infty.
\end{equation}
A completely parallel argument will yield that for the second term in (\ref{two-terms})
$$
\EX\left[e^{-2(A_1(M, Y_0^T)-A_2(\Delta_n, M, Y_0^T))}\right] < \infty,
$$
which, together with (\ref{SongJianMethod}), immediately implies (\ref{conv-2a}), which in turn implies (\ref{conv-2}), as desired.

{\bf Step $\bf 1.2$.} In this step, we prove that as $n$ tends to infinity,
\begin{equation} \label{Gn-Conv}
\log \EX[e^{A_1(M, Y_0^T)}|Y(\Delta_n)] \to \log \EX[e^{A_1(M, Y_0^T)}|Y_0^T],
\end{equation}
in probability.

First, note that by Theorem $7.23$ of~\cite{li01}, we have,
$$
\frac{d \mu_{Y}}{d\mu_B}(Y_0^T)  = \int \frac{d \mu_{Y|M}}{d\mu_B}(Y_0^T|m) d\mu_M(m),
$$
where
$$
\frac{d \mu_{Y|M}}{d\mu_B}(Y_0^T|m) = e^{\int_0^T g(s, m, Y_0^s) dY(s)-\frac{1}{2} \int_0^T g^2(s, m, Y_0^s) ds}.
$$
It then follows from (\ref{RN-1}) that
\begin{align*}
\log \EX[e^{A_1(M, Y_0^T)}|Y_0^T] & = -\log \int \frac{d \mu_{Y|M}}{d\mu_B}(Y_0^T|m) d\mu_M(m)\\
& = -\log \int e^{-A_1(m, Y_0^T)} d \mu_M(m).
\end{align*}
Similarly, we have
\begin{align*}
\frac{d\mu_{Y(\Delta_n)}}{d\mu_{B(\Delta_n)}}(Y(\Delta_n)) &= \int \frac{d\mu_{Y(\Delta_n)|M}}{d\mu_{B(\Delta_n)}}(Y(\Delta_n)|m) d\mu_M(m)\\
                                      &= \int \frac{1}{\EX[e^{A_1(M, Y_0^T)}|Y(\Delta_n), m]} d\mu_M(m).
\end{align*}
It then follows from (\ref{RN-2}) that
$$
\log \EX[e^{A_1(M, Y_0^s)}|Y(\Delta_n)] = -\log \int \frac{1}{\EX[e^{A_1(M, Y_0^T)}|Y(\Delta_n), m]} d\mu_M(m).
$$

Now, we consider the following difference:
\begin{align*}
&\hspace{-1cm} \int e^{-A_1(m, Y_0^T)} d\mu_M(m)-\int \frac{1}{\EX\left[ \left. e^{A_1(m, Y_0^T)} \right| Y(\Delta_n), m\right]} d\mu_M(m)\\
&=\int e^{-A_1(m, Y_0^T)}-e^{-A_2(\Delta_n, m, Y_0^T)} d\mu_M(m)\\
&\quad + \int e^{-A_2(\Delta_n, m, Y_0^T)} \times \frac{\EX[e^{A_1(m, Y_0^T)-A_2(\Delta_n, m, Y_0^t)}|Y(\Delta_n), m]-1}{\EX[e^{A_1(m, Y_0^T)-A_2(\Delta_n, m, Y_0^t)}|Y(\Delta_n), m]} d\mu_M(m).
\end{align*}
Applying the inequality that for any $x, y \in \mathbb{R}$,
\begin{equation} \label{expo-inequality}
|e^x-e^y| \leq |x-y| (e^x \vee e^y) \leq |x-y| (e^x+e^y),
\end{equation}
we have, for some positive constant $L$,
\begin{align*}
&\hspace{-1cm}\EX\left[\left|\int e^{-A_1(m, Y_0^T)}-e^{-A_2(\Delta_n, m, Y_0^T)} d\mu_M(m)\right|\right] \\
&\leq \int \EX\left[\left|e^{-A_1(m, Y_0^T)}-e^{-A_2(\Delta_n, m, Y_0^T)} \right| \right] d\mu_M(m)\\
&\leq \int \EX\left[\left| A_1(m, Y_0^T)-A_2(m, Y_0^T)\right| \left(e^{-A_1(m, Y_0^T)}+e^{-A_2(m, Y_0^T)} \right) \right] d\mu_M(m)\\
&\leq \int \EX\left[\left(\left| \int_0^T (g(m)-\bar{g}_{\Delta_n}(m)) dB(s) \right|+\left|\int_0^T (g(m)-\bar{g}_{\Delta_n}(m))(g(s)-\frac{1}{2}g(m)-\frac{1}{2}\bar{g}_{\Delta_n}(m)) ds\right|\right) \right.\\
&\quad \times \left. \left(e^{-A_1(m, Y_0^T)}+e^{-A_2(m, Y_0^T)} \right) \right] d\mu_M(m)\\
&\leq \int \EX\left[\left(\left| \int_0^T (g(m)-\bar{g}_{\Delta_n}(m)) dB(s) \right| \right. \right.\\
&\quad \left. \left. + (L \delta_{\Delta_n}+ L \delta_{\Delta_n} +L \delta_{\Delta_n} \|Y_0^T\| +\sup_{|s-t| \leq \delta_{\Delta_n}} |B(s)-B(t)|) \int_0^T \left| g(s)-\frac{1}{2}g(m)-\frac{1}{2}\bar{g}_{\Delta_n}(m) \right| ds\right) \right.\\
&\quad \times \left. \left(e^{-A_1(m, Y_0^T)}+e^{-A_2(m, Y_0^T)} \right) \right] d\mu_M(m),
\end{align*}
where we have rewritten $g(s, m, Y_0^s)$ as $g(m)$, $\bar{g}_{\Delta_n}(s, m, \bar{Y}_{\Delta_n, 0}^s)$ as $\bar{g}_{\Delta_n}(m)$ for notational simplicity.

Now, using (\ref{diff-2}) and the It\^{o} isometry, we deduce that
\begin{equation} \label{prev-1}
\lim_{n \to \infty} \int \EX\left[\left| \int_0^T g(m)-\bar{g}_{\Delta_n}(m) dB(s)\right|^2 \right] d\mu_{M}(m) = 0,
\end{equation}
and
\begin{equation} \label{prev-2}
\lim_{n \to \infty} \int \EX[(L \delta_{\Delta_n}+ L \delta_{\Delta_n} + L \delta_{\Delta_n} \|Y_0^T\|+\sup_{|s-t| \leq \delta_{\Delta_n}} |B(s)-B(t)|)^2] d\mu_{M}(m) = 0.
\end{equation}
Now, using a similar argument as above with (\ref{case-1}) and Lemma~\ref{improved-liptser-1}, we can show that for any constant $K$,
\begin{equation} \label{prev-3}
\EX[e^{\int_0^T K \bar{g}_{\Delta_n}^2(s) ds}] = \EX[e^{\int_0^T K (\bar{g}_{\Delta_n}(s)-g(s)+g(s))^2 ds}] = \EX[e^{\int_0^T K (2(\bar{g}_{\Delta_n}(s)-g(s))^2+2g^2(s)) ds}]< \infty,
\end{equation}
provided that $n$ is large enough, which, coupled with a similar argument as in the derivation of (\ref{SongJianMethod}), proves that for $n$ large enough,
\begin{equation}  \label{prev-4}
\hspace{-1cm} \int \EX\left[ \left(e^{-A_1(m, Y_0^T)}+e^{-A_2(\Delta_n, m, Y_0^T)}\right)^2 \right] d\mu_M(m) < \infty,
\end{equation}
and furthermore
\begin{equation} \label{prev-5}
\int \EX\left[\left(\int_0^T \left| g(s)-\frac{1}{2}g(m)-\frac{1}{2}\bar{g}_{\Delta_n}(m) \right| ds\right)^2 \times \left(e^{-A_1(m, Y_0^T)}+e^{-A_2(\Delta_n, m, Y_0^T)} \right)^2\right] d\mu_M(m) < \infty.
\end{equation}
It then immediately follows that
\begin{equation}  \label{S-prime}
\lim_{n \to \infty} \EX\left[\left|\int e^{-A_1(m, Y_0^T)}-e^{-A_2(\Delta_n, m, Y_0^T)} d\mu_M(m)\right|\right] = 0.
\end{equation}

Now, using the shorthand notations as before, we have
{\small \begin{align*}
& \hspace{-1cm} \EX\left[\left| \int e^{-A_2(\Delta_n, m, Y_0^T)} \frac{\EX[e^{A_1(m, Y_0^T)-A_2(\Delta_n, m, Y_0^T)}|Y(\Delta_n), m]-1}{\EX[e^{A_1(m, Y_0^T)-A_2(\Delta_n, m, Y_0^T)}|Y(\Delta_n), m]} d\mu_M(m) \right| \right] \\
& =\EX\left[\left|\int \frac{\EX[e^{A_1(m, Y_0^T)-A_2(\Delta_n, m, Y_0^T)}-1|Y(\Delta_n), m]}{\EX[e^{A_1(m, Y_0^T)}|Y(\Delta_n), m]} d\mu_M(m) \right|\right] \\
& \leq \EX\left[\int \frac{\EX[|e^{A_1(m, Y_0^T)-A_2(\Delta_n, m, Y_0^T)}-1||Y(\Delta_n), m]}{\EX[e^{A_1(m, Y_0^T)}|Y(\Delta_n), m]} d\mu_M(m)\right] \\
& \leq \EX\left[\int \EX[|-A_1(m, Y_0^T)+A_2(\Delta_n, m, Y_0^T)| e^{|-A_1(m, Y_0^T)+A_2(\Delta_n, m, Y_0^T)|} | Y(\Delta_n), m] \right. \\
&\quad \times \left. \EX[e^{-A_1(m, Y_0^T)}|Y(\Delta_n), m] d\mu_M(m)\right]\\
& = \EX\left[\EX[|-A_1(m, Y_0^T)+A_2(\Delta_n, m, Y_0^T)| e^{|-A_1(m, Y_0^T)+A_2(\Delta_n, m, Y_0^T)|} | Y(\Delta_n), M] \right.\\
&\quad \times \left. \EX[e^{-A_1(m, Y_0^T)}|Y(\Delta_n), M] \left(\frac{d\mu_Y}{d\mu_B}(Y_0^T)\right)/\left(\frac{d\mu_{Y|M}}{d\mu_B}(Y_0^T|M)\right)\right]\\
& = \EX\left[|-A_1(m, Y_0^T)+A_2(\Delta_n, m, Y_0^T)| e^{|-A_1(m, Y_0^T)+A_2(\Delta_n, m, Y_0^T)|} \EX[e^{-A_1(m, Y_0^T)}|Y(\Delta_n), M] \right.\\
& \quad \times \left. \EX\left[\left(\frac{d\mu_Y}{d\mu_B}(Y_0^T)\right)/\left(\frac{d\mu_{Y|M}}{d\mu_B}(Y_0^T|M)\right) | Y(\Delta_n), M \right]\right].
\end{align*}}
Now, a similar argument as in (\ref{prev-1})-(\ref{prev-5}), together with the well-known fact (see, e.g., Theorem $6.2.2$ in~\cite{ih93}) that
$$
\frac{d\mu_Y}{d\mu_B}(Y_0^T)=e^{\int_0^T \EX[g(s)|Y_0^s] dY(s)-\frac{1}{2} \int_0^T \EX^2[g(s)|Y_0^s] ds}, \quad \frac{d\mu_{Y|M}}{d\mu_B}(Y_0^T|M)=e^{\int_0^T g(s) dY(s)-\frac{1}{2} \int_0^T g^2(s) ds},
$$
yields that
$$
\lim_{n \to \infty} \EX\left[\left| \int e^{-A_2(\Delta_n, m, Y_0^T)} \frac{\EX[e^{A_1(m, Y_0^T)-A_2(\Delta_n, m, Y_0^T)}|Y(\Delta_n), m]-1}{\EX[e^{A_1(m, Y_0^T)-A_2(\Delta_n, m, Y_0^T)}|Y(\Delta_n), m]} d\mu_M(m) \right| \right] = 0.
$$
Now, we are ready to conclude that as $n$ tends to infinity,
$$
\EX[e^{-\int_0^T g(s) dY(s)+\frac{1}{2} \int_0^T g^2(s) ds}|Y(\Delta_n)] \to \EX[e^{-\int_0^T g(s) dY+\frac{1}{2} \int_0^T g^2(s) ds}|Y_0^T]
$$
in probability and furthermore (\ref{Gn-Conv}), as desired.

{\bf Step $\bf 1.3$.} In this step, we will show the convergence of $\{\EX[\log \EX[e^{A_1(M, Y_0^T)}|Y(\Delta_n), M]]\}$ and $\{\EX[\log \EX[e^{A_1(M, Y_0^T)}|Y(\Delta_n)]]\}$ and further establish the theorem under the condition (\ref{case-1}).

First of all, using the concavity of the $\log$ function and the fact that $\log x \leq x$, we can obtain the following bounds:
$$
|\log \EX[e^{A_1(M, Y_0^T)}|Y(\Delta_n), M]| \leq \left|\EX\left[A_1(M, Y_0^T)| Y(\Delta_n), M \right]\right|
+\EX\left[\left. e^{A_1(M, Y_0^T)}\right|Y(\Delta_n), M \right],
$$
and
$$
|\log \EX[e^{A_1(M, Y_0^T)}|Y(\Delta_n)]| \leq \left|\EX\left[A_1(M, Y_0^T)| Y(\Delta_n) \right]\right|+\EX\left[\left. e^{A_1(M, Y_0^T)}\right|Y(\Delta_n) \right].
$$
And furthermore, using a similar argument as in {\bf Step} $\bf 1.1$, we can show that,
\begin{align*}
\EX\left[A_1(M, Y_0^T)| Y(\Delta_n), M \right] & = A_2(\Delta_n, M, Y_0^T) \times \EX\left[A_1(M, Y_0^T)-A_2(\Delta_n, M, Y_0^T)| Y(\Delta_n), M \right]\\
& \stackrel{n \to \infty}{\longrightarrow} A_1(M, Y_0^T)
\end{align*}
in probability, and
\begin{align*}
\EX[e^{A_1(M, Y_0^T)}|Y(\Delta_n), M] &= e^{A_2(\Delta_n, M, Y_0^T)} \EX[e^{A_1(M, Y_0^T)-A_2(\Delta_n, M, Y_0^T)}|Y(\Delta_n), M]\\
& \stackrel{n \to \infty}{\longrightarrow} e^{A_1(M, Y_0^T)}
\end{align*}
in probability. It then follows from the general Lebesgue dominated convergence theorem that
$$
\lim_{n \to \infty} \EX[\log \EX[e^{A_1(M, Y_0^T)}|Y(\Delta_n), M]] = \EX\left[A_1(M, Y_0^T)\right].
$$
A parallel argument can be used to show that
$$
\lim_{n \to \infty} \EX[\log \EX[e^{A_1(M, Y_0^T)}|Y(\Delta_n)]] = \EX[\log \EX[e^{A_1(M, Y_0^T)}|Y_0^T]].
$$
So, under the condition (\ref{case-1}), we have shown that
$$
\lim_{n \to \infty} I(M; Y(\Delta_n))=I(M; Y_0^T).
$$

{\bf Step $\bf 2$.} In this step, we will use the convergence in \textbf{Step} $\bf 1$ and establish the theorem without the condition (\ref{case-1}).

Following Page $264$ of~\cite{li01}, we define, for any $k$,
\begin{equation} \label{stopping-time}
\tau_k = \begin{cases}
\inf \{t \leq T: \int_0^t g^2(s, M, Y_0^s) ds \geq k\}, \mbox{ if } \int_0^T g^2(s, M, Y_0^s) ds \geq k,\\
T, \mbox{ if } \int_0^T g^2(s, M, Y_0^s) ds < k.
\end{cases}
\end{equation}
Then, we again follow~\cite{li01} and define a truncated version of $g$ as follows:
\begin{equation*}
g_{(k)}(t, \gamma_0^t, \phi_0^t)=g(t, \gamma_0^t, \phi_0^t) \mathbbm{1}_{\int_0^t g^2(s, \gamma_0^t, \phi_0^s) ds < k}.
\end{equation*}
Now, define a truncated version of $Y$ as follows:
\begin{equation*}
Y_{(k)}(t)=\int_0^t g_{(k)}(s, M, Y_0^s) ds +B(t), \quad t \in [0, T],
\end{equation*}
which, as elaborated on Page $265$ in~\cite{li01}, can be rewritten as
\begin{equation} \label{fixed-n}
Y_{(k)}(t)=\int_0^t g_{(k)}(s, M, Y_{(k), 0}^s) ds +B(t), \quad t \in [0, T].
\end{equation}
Note that for any fixed $k$, the system in (\ref{fixed-n}) satisfies the condition (\ref{case-1}), and so the theorem holds true. To be more precise, we have
$$
I(M; Y_{(k), 0}^{T})=\EX\left[\log \frac{d\mu_{Y_{(k)}|M}}{d\mu_{B}}(Y_{(k), 0}^{T}|M)\right]-\EX\left[\log \frac{d\mu_{Y_{(k)}}}{d\mu_{B}}(Y_{(k), 0}^T)\right],
$$
where
$$
\frac{d\mu_{Y_{(k)}|M}}{d\mu_{B}}(Y_{(k), 0}^T|M)=e^{\int_0^{T} g_{(k)}(s) dY_{(k)}(s)-\frac{1}{2} \int_0^{T} g_{(k)}^2(s)ds}=e^{\int_0^{\tau_k} g(s) dY(s)-\frac{1}{2} \int_0^{\tau_k} g^2(s)ds},
$$
and
$$
\frac{d\mu_{Y_{(k)}}}{d\mu_{B}}(Y_{(k), 0}^{T})=e^{\int_0^{T} \hat{g}_{(k)}(s) dY_{(k)}(s)-\frac{1}{2} \int_0^{T} \hat{g}_{(k)}^2(s) ds}=e^{\int_0^{\tau_k} \hat{g}(s) dY(s)-\frac{1}{2} \int_0^{\tau_k} \hat{g}^2(s) ds},
$$
where
$$
\hat{g}_{(k)}(s)=\EX[g_{(k)}(s, M, Y_0^s)|Y_{(k), 0}^s], \quad \hat{g}(s)=\EX[g(s, M, Y_0^s)|Y_0^s].
$$
It then follows from straightforward computations that
$$
I(M; Y_{(k), 0}^T) =\frac{1}{2} \EX\left[\int_0^{\tau_k} (g(s)-\hat{g}(s))^2 ds\right].
$$
Notice that it can be easily verified that $\tau_k \to T$ as $k$ tends to infinity, which, together with the monotone convergence theorem, further yields that monotone increasingly,
$$
I(M; Y_{(k), 0}^T)=\frac{1}{2} \EX\left[\int_0^{\tau_k} (g(s)-\hat{g}(s))^2 ds\right] \to I(M; Y_0^T)=\frac{1}{2} \EX\left[\int_0^T (g(s)-\hat{g}(s))^2 ds\right],
$$
as $k$ tends to infinity. By \textbf{Step} $\bf 1$, for any fixed $k_i$,
$$
\lim_{n \to \infty} I(M; Y_{(k_i)}(\Delta_n))=\lim_{n \to \infty} I(M; Y_{(k_i)}(\Delta_n \cap [0, \tau_{k_i}]))=I(M; Y_{(k_i), 0}^T),
$$
which means that there exists a sequence $\{n_i\}$ such that, as $i$ tends to infinity, we have, monotone increasingly,
$$
I(M; Y_{(k_i)}(\Delta_{n_i} \cap [0, \tau_{k_i}])) \to I(M; Y_{0}^{T}).
$$
Since, by the fact that $Y_{(k_i), 0}^T$ coincides with $Y_0^T$ on $t \in [0, \tau_{k_i} \wedge T]$ and then (\ref{I<=I}), we have
$$
I(M; Y(\Delta_{n_i})) \geq I(M; Y_{(k_i)}(\Delta_{n_i} \cap [0, \tau_{k_i}])).
$$
Now, using the fact that
$$
I(M; Y(\Delta_{n_i})) \leq I(M; Y_0^T),
$$
we conclude that as $i$ tends to infinity,
$$
\lim_{i \to \infty} I(M; Y(\Delta_{n_i})) = I(M; Y_{0}^T).
$$
A similar argument can be readily applied to any subsequence of $\{I(M; Y(\Delta_n))\}$, which will establish the existence of its further subsubsequence that converges to $I(M; Y_0^T)$, which implies that
$$
\lim_{n \to \infty} I(M; Y(\Delta_n))=I(M; Y_0^T),
$$
completing the proof of the theorem.

\section{Proof of Theorem~\ref{approximation-theorem}} \label{proof-approximation-theorem}

Throughout the proof, we will rewrite $t^{(n)}_i$ as $t_i$ for notational simplicity. As in the proof of Theorem~\ref{sampling-theorem}, we will again write the summation $\sum_m (\cdot) p_M(m)$ as the integral $\int (\cdot) d\mu_M(m)$. We proceed in the following two steps.

{\bf Step $\bf 1$.} In this step, we establish the theorem assuming that there exists a constant $C > 0$ such that for all $m \in \mathcal{M}$ and all $y_0^T \in C[0, T]$,
\begin{equation} \label{approximating-case-1}
\int_0^T g^2(s, m, y_0^s) ds < C.
\end{equation}

We first note that straightforward computations yield
\begin{equation} \label{YgivenM}
\hspace{-0.5cm} f_{Y^{(n)}(\Delta_n)|M}(Y^{(n)}(\Delta_n)|M)=\prod_{i=1}^{n} \frac{1}{\sqrt{2\pi(t_i-t_{i-1})}}\exp \left({-\frac{(Y^{(n)}(t_i)-Y^{(n)}(t_{i-1})- \int_{t_{i-1}}^{t_i} g(s, M, Y_0^{(n), t_{i-1}})ds)^2}{2(t_i-t_{i-1})}} \right),
\end{equation}
and
\begin{equation} \label{justY}
\hspace{-1cm} f_{Y^{(n)}(\Delta_n)}(Y^{(n)}(\Delta_n))=\int \prod_{i=1}^{n} \frac{1}{\sqrt{2\pi(t_i-t_{i-1})}}\exp \left({-\frac{(Y^{(n)}(t_i)-Y^{(n)}(t_{i-1})- \int_{t_{i-1}}^{t_i} g(s, m, Y^{(n), t_{i-1}}_{0}) ds)^2}{2(t_i-t_{i-1})}} \right) d\mu_M(m).
\end{equation}
With (\ref{YgivenM}) and (\ref{justY}), we have
\begin{align}
\hspace{-3mm} I(M; Y^{(n)}(\Delta_n)) & =\EX[\log f_{Y^{(n)}(\Delta_n)|M}(Y^{(n)}(\Delta_n)|M)]-\EX[\log f_{Y^{(n)}(\Delta_n)}(Y^{(n)}(\Delta_n))] \nonumber \\
&=\EX\left[-A_3(M, Y_0^{(n), T}) \right]-\EX\left[\log \int e^{-A_3(m, Y_0^{(n), T})} d\mu_M(m) \right], \label{I-M-Yn}
\end{align}
where
$$
A_3(m, Y_0^{(n), T}) \triangleq \sum_{i=1}^n \left({\frac{-2\int_{t_{i-1}}^{t_i} g(s, m, Y^{(n), t_{i-1}}_{0}) ds \; (Y^{(n)}(t_i)-Y^{(n)}(t_{i-1}))+ (\int_{t_{i-1}}^{t_i} g(s, m, Y^{(n), t_{i-1}}_{0}) ds)^2}{2(t_i-t_{i-1})}} \right),
$$
and
$$
A_3(M, Y_0^{(n), T}) \triangleq \sum_{i=1}^n \left({\frac{-2\int_{t_{i-1}}^{t_i} g(s, M, Y^{(n), t_{i-1}}_{0}) ds \; (Y^{(n)}(t_i)-Y^{(n)}(t_{i-1}))+ (\int_{t_{i-1}}^{t_i} g(s, M, Y^{(n), t_{i-1}}_{0}) ds)^2}{2(t_i-t_{i-1})}} \right).
$$
On the other hand, by (\ref{l-4}), we have
\begin{align}
I(M; Y_0^T) & =\EX\left[\log \frac{d\mu_{Y|M}}{d\mu_B}(Y_0^T|M)\right]-\EX\left[\log \frac{d\mu_{Y}}{d\mu_B}(Y_0^T)\right] \nonumber\\
& =\EX \left[\log e^{-A_1(M, Y_0^T)} \right]-\EX \left[\log \int e^{-A_1(m, Y_0^T) d\mu_M(m)} \right] \nonumber\\
& =\EX \left[-A_1(M, Y_0^T)\right] -\EX \left[\log \int e^{-A_1(m, Y_0^T) d\mu_M(m)} \right]. \label{I-M-Y}
\end{align}
Now, we compute
\begin{align*}
& \hspace{-1cm} \int_0^T g(s, M, Y_0^s) dY(s)-\sum_{i=1}^{n} \frac{\int_{t_{i-1}}^{t_i} g(s, M, Y_0^{(n), t_{i-1}}) ds (Y^{(n)}(t_i)-Y^{(n)}(t_{i-1}))}{t_i-t_{i-1}}\\
& =\int_0^T g(s, M, Y_0^s) dY(s)-\sum_{i=1}^{n} g(t_{i-1}, M, Y_0^{(n), t_{i-1}}) (Y^{(n)}(t_i)-Y^{(n)}(t_{i-1}))\\
& \quad -\sum_{i=1}^{n} \frac{\int_{t_{i-1}}^{t_i} (g(s, M, Y_0^{(n), t_{i-1}})-g(t_{i-1}, M, Y_0^{(n), t_{i-1}})) ds (Y^{(n)}(t_i)-Y^{(n)}(t_{i-1}))}{t_i-t_{i-1}}.
\end{align*}
It can be easily checked that the second term of the right hand side of the above equality converges to $0$ in mean. And for the first term, we have
\begin{align*}
&\hspace{-1cm} \int_0^T g(s, M, Y_0^s) dY(s)-\sum_{i=1}^{n} g(t_{i-1}, M, Y_0^{(n), t_{i-1}}) (Y^{(n)}(t_i)-Y^{(n)}(t_{i-1}))\\
&=\sum_{i=1}^n \int_{t_{i-1}}^{t_i} g(s, M, Y_0^s) dY(s)-\sum_{i=1}^n g(t_{i-1}, M, Y_0^{(n), t_{i-1}}) (Y(t_i)-Y(t_{i-1}))\\
&\quad +\sum_{i=1}^n g(t_{i-1}, M, Y_0^{(n), t_{i-1}}) ((Y(t_i)-Y(t_{i-1}))-(Y^{(n)}(t_i)-Y^{(n)}(t_{i-1})))\\
&=\sum_{i=1}^n \int_{t_{i-1}}^{t_i} g(s, M, Y_0^s) dY(s)-\sum_{i=1}^n \int_{i=1}^n g(t_{i-1}, M, Y_0^{(n), t_{i-1}}) dY(s)\\
&\quad +\sum_{i=1}^n g(t_{i-1}, M, Y_0^{(n), t_{i-1}}) ((Y(t_i)-Y(t_{i-1}))-(Y^{(n)}(t_i)-Y^{(n)}(t_{i-1})))\\
&=\sum_{i=1}^n \int_{t_{i-1}}^{t_i} (g(s, M, Y_0^s)-g(t_{i-1}, M, Y_0^{(n), t_{i-1}})) dY(s)\\
&\quad +\sum_{i=1}^n g(t_{i-1}, M, Y_0^{(n), t_{i-1}}) ((Y(t_i)-Y(t_{i-1}))-(Y^{(n)}(t_i)-Y^{(n)}(t_{i-1}))).
\end{align*}
It then follows from Conditions (d) and (e), Lemmas~\ref{improved-liptser-1},~\ref{improved-liptser-2} and~\ref{Y-Y} that
\begin{equation}  \label{first-half}
\EX\left[\left|\sum_{i=1}^{n} \frac{\int_{t_{i-1}}^{t_i} g(s, M, Y_0^{(n), t_{i-1}}) ds (Y^{(n)}(t_i)-Y^{(n)}(t_{i-1}))}{t_i-t_{i-1}} - \int_0^T g(s, M, Y_0^s) dY(s)\right| \right]=O(\delta^{\frac{1}{2}}_{\Delta_n}).
\end{equation}
And using a similar argument as above, we deduce that
\begin{equation}  \label{second-half}
\EX\left[\left|\frac{1}{2} \sum_{i=1}^n \frac{(\int_{t_{i-1}}^{t_i} g(s, M, Y^{(n),t_{i-1}}_{0})ds)^2}{t_i-t_{i-1}}-\frac{1}{2} \int_0^T g(s, M, Y_0^s)^2 ds \right|\right] = O(\delta^{\frac{1}{2}}_{\Delta_n}).
\end{equation}
It then follows from (\ref{first-half}) and (\ref{second-half}) that
\begin{equation} \label{M-conv}
\EX\left[ \left| A_3(M, Y_0^{(n), T}) - A_1(M, Y_0^T) \right|\right] = O(\delta^{\frac{1}{2}}_{\Delta_n}).
\end{equation}

We now establish the following convergence:
{\small \begin{equation} \label{m-conv}
\lim_{n \to \infty} \EX\left[\log \int e^{-A_3(m, Y_0^{(n), T})} d\mu_M(m) \right] = \EX\left[\log \int e^{-A_1(m, Y_0^T)} d\mu_M(m) \right].
\end{equation}}
Note that using a parallel argument as in the derivation of (\ref{M-conv}), we can establish
\begin{equation}  \label{m-conv-1}
\lim_{n \to \infty} \EX \int \left| -A_3(m, Y_0^{(n),T})-A_1(m, Y_0^T) \right| d\mu_M(m) = 0;
\end{equation}
and similarly as in the derivation of (\ref{S-prime}), from Conditions (d) and (e), Lemmas~\ref{improved-liptser-1},~\ref{improved-liptser-2} and~\ref{Y-Y}, we deduce that
\begin{equation} \label{m-conv-2}
\lim_{n \to \infty} \EX\left[\int \left| e^{-A_3(m, Y_0^{(n),T})}-e^{-A_1(m, Y_0^T)} \right| d\mu_M(m)\right] = 0.
\end{equation}
Moreover, we note that it always holds that
\begin{equation} \label{m-conv-3}
\left| \log \int e^{-A_3(m, Y_0^{(n),T})} d\mu_M(m) \right| \leq \int e^{-A_3(m, Y_0^{(n),T})} d\mu_M(m)+ \left| \int A_3(m, Y_0^{(n), T}) d\mu_M(m) \right|.
\end{equation}
Then, the desired (\ref{m-conv}) follows from an application of the general Lebesgue dominated convergence theorem with (\ref{m-conv-1}), (\ref{m-conv-2}) and (\ref{m-conv-3}).

Finally, with (\ref{I-M-Yn}), (\ref{I-M-Y}), (\ref{M-conv}) and (\ref{m-conv}), we conclude that
$$
\lim_{n \to \infty} I(M; Y^{(n)}(\Delta_n)) = I(M; Y_0^T),
$$
establishing the theorem with the extra condition (\ref{approximating-case-1}).

{\bf Step $\bf 2$.}  In this step, we will use the convergence in \textbf{Step} $\bf 1$ and establish the theorem without the condition (\ref{approximating-case-1}).

Defining the stopping $\tau_k$, $g_{(k)}$ and $Y_{(k)}$ as in the proof of 2) of Theorem~\ref{sampling-theorem}, we again have
\begin{equation*}
Y_{(k)}(t)=\int_0^t g_{(k)}(s, M, Y_{(k), 0}^s) ds +B(t), \quad t \in [0, T].
\end{equation*}
For any fixed $k$, applying the Euler-Maruyama approximation as in (\ref{Euler-Maruyama-Sequence}) and (\ref{linear-interpolation}) to the above channel with respect to $\Delta_n$, we obtain the process $\{Y_{(k)}^{(n)}(t)\}$.

Now, recall that $I(M; Y^{(n)}(\Delta_n))$ can be computed as in (\ref{I-M-Yn}), and moreover, by some straightforward computations,
\begin{align*}
\EX[-A_3(M, Y_0^{(n), T})] = \EX\left[\sum_{i=1}^n {\frac{(\int_{t_{i-1}}^{t_i} g(s, M, Y^{(n), t_{i-1}}_{0}) ds)^2}{2(t_i-t_{i-1})}}\right].
\end{align*}
Similarly we have
\begin{align*}
I(M; Y_{(k)}^{(n)}(\Delta_n))& =\EX[\log f_{Y_{(k)}^{(n)}(\Delta_n)|M}(Y_{(k)}^{(n)}(\Delta_n)|M)]-\EX[\log f_{Y_{(k)}^{(n)}(\Delta_n)}(Y_{(k)}^{(n)}(\Delta_n))] \\
&= \EX[-A_4(M, Y_{(k), 0}^{(n), T})] - \EX\left[\log \int e^{-A_4(m, Y_{(k), 0}^{(n), T})} d\mu_M(m)\right],
\end{align*}
where
$$
A_4(m, Y_{(k), 0}^{(n), T})=\sum_{i=1}^n \left({\frac{-2\int_{t_{i-1}}^{t_i} g_{(k)}(s, m, Y^{(n), t_{i-1}}_{(k),0}) ds(Y_{(k)}^{(n)}(t_i)-Y_{(k)}^{(n)}(t_{i-1}))+ (\int_{t_{i-1}}^{t_i} g_{(k)}(s, m, Y^{(n), t_{i-1}}_{(k), 0}) ds)^2}{2(t_i-t_{i-1})}} \right),
$$
$$
A_4(M, Y_{(k), 0}^{(n), T})=\sum_{i=1}^n \left({\frac{-2\int_{t_{i-1}}^{t_i} g_{(k)}(s, M, Y^{(n), t_{i-1}}_{(k),0}) ds(Y_{(k)}^{(n)}(t_i)-Y_{(k)}^{(n)}(t_{i-1}))+ (\int_{t_{i-1}}^{t_i} g_{(k)}(s, M, Y^{(n), t_{i-1}}_{(k), 0}) ds)^2}{2(t_i-t_{i-1})}} \right),
$$
and moreover, by some straightforward computations,
$$
\EX[-A_4(M, Y_{(k),0}^{(n),T})] = \EX\left[\sum_{i=1}^n {\frac{(\int_{t_{i-1}}^{t_i} g_{(k)}(s, M, Y_{(k), 0}^{(n), t_{i-1}}) ds)^2}{2(t_i-t_{i-1})}}\right].
$$
Note that it can be easily verified that
$$
\frac{1}{f(Y^{(n)}(\Delta_n)|Y_{(k)}^{(n)}(\Delta_n), \tau_k)}=\EX\left[\left.\frac{1}{f(Y^{(n)}(\Delta_n)|Y_{(k)}^{(n)}(\Delta_n), \tau_k, M)}\right|Y^{(n)}(\Delta_n), \tau_k \right],
$$
which boils down to
$$
\frac{\int e^{-A_4(m, Y_{(k),0}^{(n),T})} d\mu_M(m)}{\int e^{-A_3(m, Y_0^{(n), T})} d\mu_M(m)} =\EX\left[\left. e^{A_3(M, Y_0^{(n), T})-A_4(M, Y_{(k),0}^{(n),T})} \right|Y^{(n)}(\Delta_n), \tau_k\right].
$$
Using this and Jensen's inequality, we deduce that
\begin{align}
\EX\left[\log \frac{\int e^{-A_4(m, Y_{(k),0}^{(n),T})} d\mu_M(m)}{\int e^{-A_3(m, Y_0^{(n), T})} d\mu_M(m)}\right] & = \EX \left[\log \EX\left[\left. e^{A_3(M, Y_0^{(n), T})-A_4(M, Y_{(k),0}^{(n),T})}\right|Y^{(n)}(\Delta_n), \tau_k \right] \right] \nonumber \\
& \leq \log \EX\left[e^{A_3(M, Y_0^{(n), T})-A_4(M, Y_{(k),0}^{(n),T})}\right] \nonumber\\
& \leq 0, \label{tricky}
\end{align}
where for the last inequality, we have applied Fatou's lemma as in deriving (\ref{Fatou-Lemma}).

Now, using (\ref{I<=I}) and the fact that $Y^{(n)}$ and $Y^{(n)}_{(k)}$ coincide over $[0, \tau_k \wedge T]$, we infer that
\begin{equation} \label{bigger-than-stopped}
I(M; Y^{(n)}(\Delta_n))-I(M; Y^{(n)}_{(k)}(\Delta_n)) \geq 0,
\end{equation}
and furthermore, one verifies that for any $\varepsilon > 0$,
\begin{align*}
& \hspace{-1cm} I(M; Y^{(n)}(\Delta_n))-I(M; Y^{(n)}_{(k)}(\Delta_n)) \\
& = (\EX[-A_3(M, Y_0^{(n), T})] - \EX[-A_4(M, Y_{(k),0}^{(n),T})]) \\
& \quad - \left(\EX\left[\log \int e^{-A_3(m, Y_0^{(n), T})} d\mu_M(m)\right] - \EX\left[\log \int e^{-A_4(M, Y_{(k),0}^{(n),T})} d\mu_M(m)\right]\right)\\
& \stackrel{(a)}{\leq} \EX\left[\sum_{i=1}^n {\frac{(\int_{t_{i-1}}^{t_i} g(s, M, Y^{(n), t_{i-1}}_{0}) ds)^2}{2(t_i-t_{i-1})}}\right] - \EX\left[\sum_{i=1}^n {\frac{(\int_{t_{i-1}}^{t_i} g_{(k)}(s, M, Y^{(n), t_{i-1}}_{(k), 0}) ds)^2}{2(t_i-t_{i-1})}}\right]\\
& = \EX\left[\sum_{i=1}^n {\frac{(\int_{t_{i-1}}^{t_i} g(s, M, Y^{(n), t_{i-1}}_{0}) ds)^2}{2(t_i-t_{i-1})}}\right] - \EX\left[\sum_{i=1}^n {\frac{(\int_{t_{i-1} \wedge \tau_k}^{t_i \wedge \tau_k} g(s, M, Y^{(n), t_{i-1}}_{0}) ds)^2}{2(t_i-t_{i-1})}}\right]\\
& = \EX\left[\sum_{i=1}^n {\frac{(\int_{t_{i-1} \vee \tau_k}^{t_i \vee \tau_k} g(s, M, Y^{(n), t_{i-1}}_{0}) ds)^2}{2(t_i-t_{i-1})}}\right]\\
& \leq \EX\left[\int_{\tau_k}^{T} g(s, M, Y^{(n), \lfloor s \rfloor_{\Delta_n}}_{0})^2 ds\right]\\
& \leq \EX\left[\int_{\tau_k}^{T} g(s, M, Y^{(n), \lfloor s \rfloor_{\Delta_n}}_{0})^2 ds; T-\tau_k \leq \varepsilon \right]+\EX\left[\int_{\tau_k}^{T} g(s, M, Y^{(n), \lfloor s \rfloor_{\Delta_n}}_{0})^2 ds; T-\tau_k > \varepsilon \right]\\
& \leq \int_{T-\varepsilon}^{T} \EX\left[ g(s, M, Y^{(n), \lfloor s \rfloor_{\Delta_n}}_{0})^2 \right] ds +\EX\left[\int_{\tau_k}^{T} g(s, M, Y^{(n), \lfloor s \rfloor_{\Delta_n}}_{0})^2 ds; T-\tau_k > \varepsilon \right].
\end{align*}
where we have used (\ref{tricky}) for (a) and $\lfloor s \rfloor_{\Delta_n}$ denotes the unique number $n_0$ such that $t_{n_0} \leq s < t_{n_0+1}$. Using the easily verifiable fact that $\tau_k \to T$ almost surely as $k$ tends to infinity, (\ref{bigger-than-stopped}) and the fact that $\varepsilon$ can be arbitrarily small, we conclude that as $k$ tends to infinity, uniformly over all $n$,
\begin{equation} \label{absolute-value-bounded}
I(M; Y^{(n)}_{(k)}(\Delta_n)) \to I(M; Y^{(n)}(\Delta_n)).
\end{equation}
Next, an application of the monotone convergence theorem with the fact that $\tau_k \to T$ as $k$ tends to infinity yields that monotone increasingly
$$
I(M; Y_{(k), 0}^T)=\frac{1}{2} \EX\left[\int_0^{\tau_k} (g(s)-\hat{g}(s))^2 ds\right] \to I(M; Y_0^T)=\frac{1}{2} \EX\left[\int_0^T (g(s)-\hat{g}(s))^2 ds\right]
$$
as $n$ tends to infinity. By \textbf{Step} $\bf 1$, for any fixed $k_i$,
$$
\lim_{n \to \infty} I(M; Y_{(k_i)}^{(n)}(\Delta_n))=I(M; Y_{(k_i), 0}^T),
$$
which means that there exists a sequence $\{n_i\}$ such that, as $i$ tends to infinity,
$$
I(M; Y_{(k_i)}^{(n_i)}(\Delta_{n_i})) \to I(M; Y_{0}^{T}).
$$
Moreover, by (\ref{absolute-value-bounded}),
$$
\lim_{i \to \infty } I(M; Y_{(k_i)}^{(n_i)}(\Delta_{n_i})) = \lim_{i \to \infty } I(M; Y^{(n_i)}(\Delta_n)),
$$
which further implies that
$$
\lim_{i \to \infty} I(M; Y^{(n_i)}(\Delta_{n_i}))=I(M; Y_0^T).
$$
The theorem then follows from a usual subsequence argument as in the proof of 2) of Theorem~\ref{sampling-theorem}.


\begin{thebibliography}{99}

\bibitem{as63}
R.~Ash.
Capacity and error bounds for a time-continuous Gaussian channel. {\em Information and Control}, vol. 6, pp. 14-27, 1963.

\bibitem{as64}
R.~Ash.
Further discussion of a time-continuous Gaussian channel. {\em Information and Control}, vol. 7, pp. 78-83, 1964.

\bibitem{as65}
R.~Ash. {\em Information Theory}, Wiley Interscience, New York, 1965.

\bibitem{be62}
P.~Bethoux.
Test et estimations concernant certaines functions aleatoires en particulier Laplaciennes. {\em Ann. Inst. Henri Poincare}, vol. 27, pp. 255-322, 1962.

\bibitem{co2006}
T.~Cover and J.~Thomas.
{\em Elements of Information Theory}, 2nd ed., Wiley Interscience, New York, 2006.

\bibitem{Duncan70}
T.~Duncan.
On the calculation of mutual information. {\em SIAM Journal on Applied Mathematics}, vol. 19, no. 1, pp. 215-220, 1970.

\bibitem{Durrett}
R.~Durrett.
{\em Probability: Theory and Examples}, 4th ed., Cambridge Series in Statistical and Probabilistic Mathematics, Cambridge University Press, Cambridge, 2010.

\bibitem{fo61}
R.~Fortet.
Hypothesis testing and Estimation for Laplacian Functions.
{\em Fourth Berkeley Symposium on Mathematical Statistics and Probability}, vol. 1, pp. 289-305, 1961.

\bibitem{ga68}
R.~Gallager.
\newblock {\em Information Theory and Reliable Communication}, Wiley, New
York, 1968.

\bibitem{ge59}
A.~Gelfand and I.~Yaglom.
Calculation of the amount of information  about a random function contained in another such function.  {\em Uspekhi Mat.  Nauk}, vol. 12, pp. 3-52, 1957. Trans. in  {\em Am. Math. Sot. Trans.},  Ser. 2, vol. 12, pp. 199-247, 1959.

\bibitem{huang}
R.~Huang and R.~Johnson.
Information capacity of time-continuous channels. {\em IEEE Trans.  Info.  Theory}, IT-8, pp. l91-l98, 1962.

\bibitem{johnson}
R.~Huang and R.~Johnson.
Information transmission with time-continuous random processes. {\em IEEE Trans. Info. Theory}, vol. 9, no. 2, pp. 84-94, 1963.

\bibitem{kl92}
P.~Kloeden and E.~Platen.
{\em Numerical Solution of Stochastic Differential Equations}, Stochastic Modelling and Applied Probability, vol. 23, Springer-Verlag, Berlin Heidelberg, 1992.

\bibitem{ih93}
S.~Ihara.
{\it Information Theory for Continuous Systems}, World Scientific, Singapore, 1993.

\bibitem{ja74}
B.~Jacob, M.~Zakai and J.~Ziv.
On the  $\varepsilon$-entropy and the rate-distortion function of certain non-Gaussian processes. {\em IEEE Trans. Info. Theory}, vol. 20, no. 4, pp. 517-524, 1974.

\bibitem{ka71}
T.~Kadota, M.~Zakai and J.~Ziv.
Mutual information of the white Gaussian channel with and without feedback. {\em IEEE Trans. Info. Theory}, vol. 17, pp. 368-371, 1971.

\bibitem{ka91}
I.~Karatzas and S.~Shreve.
\newblock {\em Brownian Motion and Stochastic Calculus},
\newblock New York: Springer-Verlag, 1991.

\bibitem{li01}
R.~Liptser and A.~Shiryaev.
{\em  Statistics of random processes (I): General theory}, 2nd edition, Springer-Verlag, Berlin, 2001.

\bibitem{mao97}
X.~Mao.
\newblock {\em Stochastic Differential Equations and Applications}, Horwood, 1997.

\bibitem{ny24}
H.~Nyquist. Certain factors affecting telegraph speed. {\em The Bell System Technical Journal}, pp. 324, 1924.

\bibitem{ok95}
B.~Oksendal.
\newblock {\em Stochastic Differential Equations: An Introduction with Applications}, Springer, Berlin, 1995.

\bibitem{pi64}
M.~Pinsker.
{\em Information and Information Stability of Random Variables and Processes}, Holden-Day, San Francisco, 1964.

\bibitem{ro10}
H.~Royden.
{\em Real analysis}, 4th edition, Prentice Hall, Boston, 2010.

\bibitem{sh48}
C.~Shannon. A mathematical theory of communication. {\em The Bell System Technical Journal}, vol. 27, pp. 379-423, 1948.

\bibitem{sh49}
C.~Shannon. Communication in the presence of noise. {\em Proc. IRE}, vol. 37, pp. 10-21, 1949.

\bibitem{sl76}
D.~Slepian.
On Bandwidth. {\em Proc. IEEE}, vol. 64, no. 3, 1976.

\bibitem{wy66}
A.~Wyner.
The capacity of the band-limited Gaussian channel, {\em The Bell System Technical Journal}, pp. 359-395, 1966. Also reprinted in {\em Key Papers in the Development of Information Theory}, edited by D.~Slepian, IEEE Press, New York, pp. 190-193, 1974.

\end{thebibliography}
\end{document}